 \lstdefinelanguage{Magma}%
  {%
   otherkeywords={:=,+:=,-:=,*:=},%
   procnamekeys={function,func,intrinsic,procedure,proc,return},%
   morekeywords={true,false},%
   morekeywords=[2]{adj,and,cat,cmpeq,cmpne,diff,div,eq,ge,gt,in,is,join,le,lt,%
          meet,mod,ne,notadj,notin,notsubset,or,sdiff,subset,xor},%
   morekeywords=[3]{assigned,break,by,case,catch,continue,declare,default,%
          delete,do,elif,else,end,eval,exists,exit,for,forall,fprintf,if,local,%
          not,print,printf,quit,random,read,readi,repeat,restore,save,select,%
          then,time,to,try,until,vprint,vprintf,vtime,when,where,while},%
   morekeywords=[4]{clear,forward,freeze,iload,import,load},%
   morekeywords=[5]{assert,assert2,assert3,error,require,requirege,requirerange},%
   morekeywords=[6]{car,comp,cop,elt,ext,frac,hom,ideal,iso,lideal,loc,map,%
          ncl,pmap,quo,rec,recformat,rep,rideal,sub},%
      sensitive,%
      morecomment=[l]//,%
      morecomment=[s]{/*}{*/},%
      morestring=[b]"%
  }[keywords,procnames,comments,strings]%
\newcommand{\ks}{\mathsf{k}}
\def\red{\color[rgb]{1,0,0}}
\def\verde{\color[rgb]{.2,.5,.1}}
\newcommand\sC{{\mathcal C}}
\newcommand\sT{{\mathcal T}}
\newcommand\sE{{\mathcal E}}
\newcommand\sA{{\mathcal A}}
\newcommand\sF{{\mathcal F}}
\newcommand\sL{{\mathcal L}}
\newcommand\sN{{\mathcal N}}
\newcommand\sK{{\mathcal K}}
\newcommand\sY{{\mathcal Y}}
\newcommand\sH{{\mathcal H}}
\newcommand\sM{{\mathcal M}}
\newcommand\om{\omega}
\newcommand\la{\lambda}
\newcommand\Lam{\Lambda}
\newcommand\e{\epsilon}
\newcommand\s{\sigma}
\newcommand\Ga{\Gamma}
\newcommand\ga{\gamma}
\newcommand\de{\delta}
\newcommand{\CC}{\ensuremath{\mathbb{C}}}
\newcommand{\RR}{\ensuremath{\mathbb{R}}}
\newcommand{\ZZ}{\ensuremath{\mathbb{Z}}}
\newcommand{\QQ}{\ensuremath{\mathbb{Q}}}
\newcommand{\sS}{\ensuremath{\mathcal{S}}}
\newcommand{\NN}{\ensuremath{\mathbb{N}}}
\newcommand{\hol}{\ensuremath{\mathcal{O}}}
\newcommand{\PP}{\ensuremath{\mathbb{P}}}
\newcommand{\FF}{\ensuremath{\mathbb{F}}}
\newcommand{\ra}{\ensuremath{\rightarrow}}
\def\eea{\end{eqnarray*}}
\def\bea{\begin{eqnarray*}}
\DeclareMathOperator{\Sing}{Sing}
\newcommand\dual{\mathrel{\raise3pt\hbox{$\underline{\mathrm{\thinspace d
\thinspace}}$}}}
\newcommand\qe{\ifhmode\unskip\nobreak\fi\quad $\Box$}       
\def\BOX{\hfill\lower.5\baselineskip\hbox{$\Box$}}
\newtheorem{theorem}{Theorem}
\newtheorem{theo}[theorem]{Theorem}
\newtheorem{remark}[theorem]{Remark}
\newenvironment{rem}{\begin{remark}\rm}{\end{remark}}
\newtheorem{prop}[theorem]{Proposition}
\newtheorem{cor}[theorem]{Corollary}
\newtheorem{example}[theorem]{Example}
\newenvironment{ex}{\begin{example}\rm}{\end{example}}
\theoremstyle{definition}
\newtheorem{defin}[theorem]{Definition}
\newenvironment{definition}{\begin{defin}\rm}{\end{defin}}
\newcommand\NR{} 
\def\NR[#1,#2]{%
 node[draw, rounded corners ](#1){#2}
}
\def\red{\color[rgb]{1,0,0}}
\def\verde{\color[rgb]{.2,.5,.1}}
\def\tagform@#1{\maketag@@@{\ignorespaces#1\unskip\@@italiccorr}}
\newcolumntype{H}{@{}>{\lrbox0}l<{\endlrbox}} 
\begin{document}

\title[Coding theory of Normal surfaces, ]{A  new coding theory,  for normal surfaces,  and   ADE singularities, I }

\thanks{AMS Classification: 14C30, 14J28, 14J25,  14J70, 14N25, 16G99,   32G20, 32Q15.}

\author{ Fabrizio Catanese}
\address{Fabrizio Catanese, 
 Mathematisches Institut der Universit\"{a}t
Bayreuth, NW II\\ Universit\"{a}tsstr. 30,
95447 Bayreuth, Germany.}


\email{Fabrizio.Catanese@uni-bayreuth.de,}

\maketitle

\textit{Dedicated to  Wolfgang Ebeling, a gentle spirit and a deep mathematician,  in memoriam.} 

\begin{abstract}
In this article we extend the theory of the binary codes (the strict code $\sK$ and the extended code $\sK'$),  associated to
a projective nodal surface, to a coding theory for   normal surfaces, with special consideration
of the surfaces with ADE (Rational Double Points) singularities.

We define a new theory of generalized labeled codes, establish in the geometric case basic restrictions for the weights
of these codes, and some basic inequality. A crucial  method  that we establish is the  extension
  of the concept of `code shortening'
  to the case of generalized codes: this  is the algebraic counterpart of the geometric notion of a partial smoothing
of the singular points, and leads to the concept of ancestors, which we illustrate through several examples.
\end{abstract}


\tableofcontents

\setcounter{section}{0}

\section{Introduction}

The interplay between coding theory and lattice theory is a central thread in mathematics, as witnessed for instance by 
Wolfgang Ebeling's works \cite{codes-lattices}, \cite{monodromy}, among many other ones (as \cite{conway-sloane}).

In a sense, codes can be viewed  as a finite approximation to lattices, which are  free abelian groups endowed with a 
$\ZZ$-valued symmetric bilinear form. In geometry, codes may be used to describe the saturation of a given sublattice $\sL$  inside a  lattice
$\Lambda$,  which usually for us  is the integral second cohomology group $H^2(S, \ZZ)$ of an algebraic surface (or of an oriented compact 4-manifold).

The advantage of coding theory over a finite field is of being essentially linear algebra, and as such  it has stunning connections
with the classification of configurations of sets of points in finite projective spaces.

For geometric applications, however,  one feels  the need to enlarge the realm of coding theory, replacing codes over
finite fields by more general embeddings  of a finite abelian group inside a  direct sum of labeled 
abelian groups. The geometric underlying idea is to view the first homology group $H_1$ of the smooth part
of a normal complete 
algebraic surface as the  quotient of a direct sum of local homology groups, labeled by a  class of surface 
singularities (for ADE singularities, we just look at holomorphic isomorphism classes, while, in the most general case, we have an equivalence  class, including   the relation generated by  equisingular deformation).

Cumbersome as it may look on the onset, the key feature, as in \cite{nodal}, is to relate partial smoothing
of singularities to an operation in coding theory, called shortening. Whereas, for codes given by a vector
space $\sK$ of functions defined on a set $\Sigma$, shortening is a very simple concept,
amounting to taking the functions with support contained in a subset $\sN \subset \Sigma$, 
in our general theory the concept is more subtle, due to its direct geometric meaning
related to partial smoothings, which may produce several singular points out of a complicated singular popint.
 But the main idea is that, once a singular surface is unobstructed (this means that we may independently achieve
 all the local deformation of the singularities) 
one can define the concept of ancestors, which are roughly speaking the singular surfaces with a maximal set of singularities,
and from which one can determine all other classes of singular surfaces.

We can  then view the stratification of the space of singular surfaces
of a fixed degree $d$ in $\PP^3 := \PP^3_{\CC}$, especially for $d \leq 4$,  as the collection of partial smoothings of
a finite set of ancestors (the case $d=3$ is illustrated in the final section). 

To make  the above discussion more concrete (otherwise it would be hardly understandable), 
I want now to illustrate the above  idea describing the  main motivation for the present work:  the success of binary coding theory for the study of nodal surfaces,
and in particular the following theorem, proven in \cite{nodal}, which determined the irreducible components of the space
of nodal quartic surfaces in $\PP^3$  (the complete classification, in spite of a lot of classical work, \cite{rohn86}, \cite{rohn87}, \cite{jessop1916quartic}, was  still  open).

\begin{theo}\label{Nodal-Quartics}

(I)  The subset of the Nodal Severi variety  $\sF(4, \nu)$ corresponding to  the nodal quartic surfaces 
with a  fixed number $\nu$ of nodes (it must be $\nu  \leq 16$)  and  fixed  binary code
$\sK'$ is smooth and connected, hence irreducible.

(II) The possible codes $\sK'$ 
 appearing are  exactly all the shortenings of the extended  Kummer code $\sK'_{Kum}$,
and are listed as: cases (0), (00), (i), (1), (2) , (II),(III-1), (III-2), (a), (b), (c), (d); hence we have 11 nontrivial codes in their minimal code-embedding.

(III)   For $ \nu \in \{ 0,1,2,3,4,5, 14,15,16\}$, $\sF (4, \nu)$ has exactly one irreducible component, and 
exactly two irreducible components for $ \nu = 6,7,13$, exactly three irreducible components for $ \nu = 8,9, $
exactly four  irreducible components for $ \nu = 11,12, $ exactly five  irreducible components for $ \nu = 10. $

(III) A component of   $\sF(4, \nu)$ is in the closure of another component of $\sF(4, \nu')$
if and only if $\nu \geq \nu'$ and the second code is a shortening of the first.
Hence  the stratification of   $\sF(4, \nu)$ is made of 34 strata, where each stratum is in the boundary of another 
according to   the above Genealogy tree = Kummer nontrivial genealogy for nodal quartic surfaces.
\end{theo}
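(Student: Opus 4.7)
The plan is to reduce the whole classification to the combinatorics of shortenings of the extended Kummer code $\sK'_{Kum}$, via a dictionary between partial smoothings of nodes and code shortening. First I would record the deformation-theoretic input. A quartic with only ordinary nodes is unobstructed at each node (every local deformation of a node can be globalized, since $H^1(\PP^3, \sI_{Z}(4)) = 0$ for $Z$ the set of nodes with their $A_1$-tangent directions, for any $\nu \leq 16$), so the equisingular locus in which all $\nu$ nodes persist is smooth. The code $\sK'$ is a topological invariant of the link data, hence locally constant along equisingular deformations; therefore the subset of $\sF(4,\nu)$ with fixed $\sK'$ is smooth, and the real content of (I) is the connectedness.

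For part (II), the starting point is that the Kummer quartic realizes the maximal $\nu = 16$ and its code is the $[16,6,4]_2$ Reed--Muller/Kummer code $\sK'_{Kum}$. Given any nodal quartic $X$ with $\nu < 16$ nodes, I would construct a one-parameter degeneration of $X$ to a Kummer quartic in which the original $\nu$ nodes persist and $16 - \nu$ additional nodes appear in the limit (this uses the irreducibility of the universal family of quartics together with the density of ancestor loci). The code-theoretic reflection of such a degeneration is precisely that $\sK'$ equals the shortening of $\sK'_{Kum}$ on the subset of the $16 - \nu$ vanishing nodes. Enumerating shortenings of $\sK'_{Kum}$ up to the natural action of $\Aut(\sK'_{Kum})$ produces the asserted eleven cases (0), (00), (i), (1), (2), (II), (III-1), (III-2), (a), (b), (c), (d), and shows that no other codes can appear.

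For the connectedness part of (I), I would argue case by case, presenting for each of the eleven codes an explicit rational family dominating the stratum: double covers of $\PP^2$ branched along suitable singular plane sextics, determinantal presentations of symmetric quartics, and direct partial smoothings inside the versal deformation of the Kummer quartic. Part (III) then reduces to bookkeeping: one distributes the eleven codes according to the cardinality $\nu$ of their support and reads off the tabulated component counts; the allowed range $\nu \leq 16$ and the special low-$\nu$ and high-$\nu$ counts are forced by the weight structure of $\sK'_{Kum}$. The final closure statement follows from the explicit correspondence between partial smoothings and shortenings: the boundary of a stratum with code $\sK'$ consists exactly of the strata with codes $\sK''$ of which $\sK'$ is a shortening, and the resulting incidence poset is the 34-stratum Kummer genealogy tree.

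The main obstacle is the connectedness in (I). Smoothness, the code enumeration of (II), and the boundary dictionary are forced by local deformation theory plus code combinatorics, but connectedness cannot be read off the code alone: each of the eleven strata has to be treated through its own birational model, and within that model one must run a monodromy argument to rule out an accidental splitting into several components all carrying the same code $\sK'$. This case-by-case geometric analysis is the technical heart of the theorem, and is what the remainder of the motivating paper \cite{nodal} is devoted to.
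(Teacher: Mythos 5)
Your proposal has the right deformation-theoretic starting point (unobstructedness of the nodes, hence smoothness of the equisingular stratum, and the dictionary of Theorem \ref{thm_d_realized} between partial smoothings and code shortenings), but it leaves precisely the two essential points unproved, and in one place it is circular. For (II) you propose to degenerate an arbitrary nodal quartic to a Kummer quartic in such a way that its $\nu$ nodes persist, invoking ``the density of ancestor loci''; but the statement that the $16$-nodal Kummer surfaces are ancestors for \emph{every} stratum is exactly (a large part of) what the theorem asserts, so it cannot be taken as an input. The actual argument, as the paper indicates, runs in the opposite direction: one first constrains the possible codes $\sK'$ directly by coding-theoretic restrictions (weights divisible by $4$, resp.\ $8$, the B-inequality $k'\geq \nu-10$, etc.), then uses Nikulin's results on primitive embeddings of the lattice $\sL'$ determined by $\sK'$ into the K3 lattice $\Lambda$ to decide existence and uniqueness of the embedding, and realizes all shortenings of $\sK'_{Kum}$ geometrically via Theorem \ref{thm_d_realized} applied to the (unobstructed) Kummer quartic. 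Your route simply assumes the conclusion of this classification.

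The second gap is connectedness in (I). You defer it to a case-by-case construction of eleven explicit birational models together with monodromy arguments, none of which is carried out, and you assert that connectedness ``cannot be read off the code alone.'' In the proof this theorem actually rests on (in \cite{nodal}, which the present paper only quotes), connectedness \emph{is} obtained uniformly from the code: the code $\sK'$ determines the saturated lattice $\sL^{sat}$ (Theorem \ref{codehomology2}), Nikulin's theorem gives uniqueness of its primitive embedding into $\Lambda$, and the Torelli theorem together with surjectivity of the period map for K3 surfaces yields smoothness and connectedness of the locus of quartics with that lattice polarization, with no stratum-by-stratum geometry needed. So the ``technical heart'' you identify is handled by an ingredient (Torelli plus Nikulin) that your proposal never mentions; as written, parts (I), (III) and the closure statement in (IV) remain unproved, and the enumeration of the eleven codes and $34$ strata is asserted rather than derived.
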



{\bf Kummer genealogy tree of nodal quartic surfaces.}
\begin{center}
\begin{tikzpicture}[scale=.5]


\node[anchor=west]  at (-0.5,1.5) {\# of Nodes};   

\node[anchor=west]  at (1,0) {16};   
\node[anchor=west]  at (1,-2) {15};
\node[anchor=west]  at (1,-4) {14};
\node[anchor=west]  at (1,-6) {13};
\node[anchor=west]  at (1,-8) {12};
\node[anchor=west]  at (1,-10) {11};
\node[anchor=west]  at (1,-12) {10};
\node[anchor=west]  at (1,-14) {9};
\node[anchor=west]  at (1,-16) {8};
\node[anchor=west]  at (1,-18) {7};
\node[anchor=west]  at (1,-20) {6};
\node[anchor=west]  at (1,-22) {5};
\node[anchor=west]  at (1,-24) {4};

\path 
(10,0)  \NR[N_0,0]
(10,-2) \NR[N_00,00] 
(10,-4) \NR[N_i,i];

\draw (N_0)--(N_00)--(N_i);

\path
(8,-6) \NR[N_1a,1] (12,-6) \NR[N_2,2];

\draw (N_i)--(N_1a); \draw (N_i)--(N_2);

\path
(6,-8) \NR[N_1b,1] (10,-8) \NR[N_III_2a,III-2] (14,-8) \NR[N_II,II] (18,-8) \NR[N_III_1,III-1];

\draw (N_1a)--(N_1b); \draw (N_1a)--(N_III_2a); 
 \draw (N_2)--(N_III_2a); \draw (N_2)--(N_III_2a); \draw (N_2)--(N_II); \draw (N_2)--(N_III_1); 

\path
(10,-10) \NR[N_III_2b,III-2] (14,-10) \NR[N_a11,a] (16,-10) \NR[N_b11,b] (20,-10) \NR[N_c11,c];

\draw (N_1b)--(N_III_2b);
\draw (N_III_2a)--(N_III_2b); \draw (N_III_2a)--(N_a11); \draw (N_III_2a)--(N_b11);
\draw (N_II)--(N_a11);
\draw (N_III_1)--(N_a11); \draw (N_III_1)--(N_b11);\draw (N_III_1)--(N_c11);

\path
(10,-12) \NR[N_III_2c,III-2] (14,-12) \NR[N_a10,a] (16,-12) \NR[N_b10,b] (20,-12) \NR[N_c10,c] (22,-12) \NR[N_d10,d];

\draw (N_III_2b)--(N_III_2c); \draw (N_III_2b)--(N_a10); \draw (N_III_2b)--(N_b10);
\draw (N_a11)--(N_a10);\draw (N_a11)--(N_d10);
\draw (N_b11)--(N_b10);\draw (N_b11)--(N_d10);
\draw (N_c11)--(N_c10);\draw (N_c11)--(N_d10);

\path
 (14,-14) \NR[N_a9,a] (16,-14) \NR[N_b9,b]  (22,-14) \NR[N_d9,d];

 \draw (N_III_2c)--(N_a9); \draw (N_III_2c)--(N_b9);
\draw (N_a10)--(N_a9);\draw (N_a10)--(N_d9);
\draw (N_b10)--(N_b9);\draw (N_b10)--(N_d9);
\draw (N_c10)--(N_d9);
\draw (N_d10)--(N_d9);

\path
 (14,-16) \NR[N_a8,a] (16,-16) \NR[N_b8,b]  (22,-16) \NR[N_d8,d];

\draw (N_a9)--(N_a8);\draw (N_a9)--(N_d8);
\draw (N_b9)--(N_b8);\draw (N_b9)--(N_d8);
\draw (N_d9)--(N_d8);
\path
 (16,-18) \NR[N_b7,b]  (22,-18) \NR[N_d7,d];

\draw (N_a8)--(N_d7);
\draw (N_b8)--(N_b7);\draw (N_b8)--(N_d7);
\draw (N_d8)--(N_d7);

\path
 (16,-20) \NR[N_b6,b]  (22,-20) \NR[N_d6,d];

\draw (N_b7)--(N_b6);\draw (N_b7)--(N_d6);
\draw (N_d7)--(N_d6);

\path  (22,-22) \NR[N_d5,d] (22,-24) \NR[N_d4,d];

\draw (N_b6)--(N_d5);
\draw (N_d6)--(N_d5) -- (N_d4);
  
\end{tikzpicture}
\end{center}

The above main theorem for quartics, Theorem  \ref{Nodal-Quartics}, is based, as already said, on binary coding theory,
but the main new idea, beyond the use of the Torelli theorem for K3 surfaces (see \cite{torelli}) and 
Nikulin's theorems on primitive embeddings
of lattices in the K3 lattice  \cite{nikulin}, is  really 
 the relation between unobstructedness and shortenings (Theorem \ref{thm_d_realized}).
 
 Nikulin' theory of primitive embeddings of lattices offers criteria for existence and unicity of such embeddings
of lattices, and
these results are very useful for the wider  class of nodal polarized K3 surfaces.

 If we fix  a `potential' K3 code $\sK'$, $\sK'$ determines a lattice $\sL'$, candidate to be 
  $\sL^{sat} : = \Lambda \cap \QQ \sL$; and 
 it turns out, using the  results of Nikulin on integral quadratic forms,  that 
there is at most one    primitive embedding of $\sL'$  inside $\Lambda$, and one can then decide whether
such a primitive embedding exists.
\bigskip

In this way the theorem for quartics extends also to all nodal K3 surfaces as follows (this is an abridged version, see \cite{nodal} for
more details).

\begin{theo}\label{mtK3}
The connected components of the 
Nodal Severi variety $\sF_{K3}(d, \nu)$ of nodal K3' s of degree $d$ ($d$ is  any even number) and with $\nu$ nodes are in bijection with the isomorphism classes of their extended codes $\sK'$ (equivalently, of the pair of codes $\sK \subset \sK'' \subset \FF_2^{\nu}$).

Apart from some sporadic cases, which are however obtained via the projection from a node of a nodal K3 surface, we have
several main stream cases, that is, we get all the shortenings  $\sK''$ of five  fixed ancestors:

\begin{enumerate}
\item
If $ d \equiv 0 \ ( mod \ 8) $ of   the K8 code 
(generated by the strict Kummer code $\sK_{Kum}$ and by the characteristic function  of an affine plane $\pi$).
\item
 If $ d \equiv 2 \ ( mod \ 8) $ of the code  
generated by the linear functions
on $\FF_2^4$ and by the quadratic function $x_1 y_1 + x_2 y_2 + 1$. 
\item
If $ d \equiv 4 \ ( mod \ 8) $ and the weight $ t =2$ does not occur,   of the extended Kummer code
$\sK''_{Kum}$.
\item
 If  $ d \equiv 4 \ ( mod \ 8) $ and the weight $ t =2$ occurs,  we get all   the shortenings of the special 
code $\sK'$ occurring for $ \nu=14$, $k'=4$.
\item
 If $ d \equiv 6 \ ( mod \ 8) $  we get all the shortenings $\sK'$ of the two codes
occurring  for $\nu=15$:   the first $\sK'$   is the strict Kummer code, in the second $\sK'$
 there is a weight $4$ vector.
\end{enumerate}

\end{theo}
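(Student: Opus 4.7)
The plan is to reduce the classification of connected components of $\sF_{K3}(d,\nu)$ to the classification of possible extended codes $\sK'$ (equivalently, pairs $\sK\subset\sK''$), and then to enumerate, within each residue class of $d$ modulo $8$, the maximal (ancestor) codes from which all others arise by shortening.

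\textbf{The code is a complete invariant.} First I would verify that the pair $(\sK,\sK'')$ is a locally constant invariant on $\sF_{K3}(d,\nu)$. The $\nu$ nodes give $(-2)$-classes spanning a sublattice $\sL\subset H^2(S,\ZZ)$ of fixed intersection form; its saturation $\sL^{\mathrm{sat}}$ inside the K3 lattice $\Lambda$ can change only discretely under deformation, and its interaction with the polarization class of square $d$ produces $\sK'$. For the converse -- that two surfaces sharing a code lie in a single component -- the crucial inputs are Nikulin's theorems on primitive embeddings \cite{nikulin}, which guarantee at most one primitive embedding of the lattice $\sL'$ determined by $\sK'$ into $\Lambda$, together with the global Torelli theorem \cite{torelli} and the surjectivity of the period map. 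These show that the stratum with prescribed $(\sK,\sK'')$, whenever non-empty, is a connected period-domain quotient. Existence of a surface realizing a candidate code follows from the same embedding criterion combined with surjectivity of the period map.

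\textbf{Shortenings reduce the problem to ancestors.} By the unobstructedness of nodal K3 surfaces, any subset of the nodes can be independently smoothed. Theorem \ref{thm_d_realized} translates this partial smoothing into the algebraic operation of code shortening: if $\sK'$ is realized at $\nu$ nodes, every shortening of $\sK'$ is realized at some smaller $\nu'$. Hence the set of realizable codes is downward-closed under shortening, and it is determined by its maximal elements -- the ancestors -- once the latter are known.

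\textbf{Enumeration of ancestors modulo $8$.} The heart of the proof is the arithmetic enumeration of ancestors, which depends on $d\pmod 8$ because the Kummer lattice in its largest symmetric realization forces the congruence $d\equiv 4\pmod 8$ on the polarization class. Starting from $\nu=16$ with the extended Kummer code $\sK''_{Kum}$ (case (3), $d\equiv 4\pmod 8$ with no weight-$2$ vector), I would work through the remaining residues: adjoining the characteristic function of an affine plane $\pi$ produces the K8 code for case (1), $d\equiv 0\pmod 8$; replacing the relevant summand by the quadratic function $x_1y_1+x_2y_2+1$ on $\FF_2^4$ gives case (2), $d\equiv 2\pmod 8$; at $\nu=14$, $k'=4$ one finds the special code of case (4), $d\equiv 4\pmod 8$ with a weight-$2$ vector present; and at $\nu=15$ the two codes of case (5), $d\equiv 6\pmod 8$. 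For each candidate I would verify (a) existence of a primitive embedding of $\sL'$ into $\Lambda$ carrying a class of square $d$, using Nikulin's discriminant-form criteria, and (b) maximality, by ruling out strictly larger primitive overlattices of $\nu\cdot\langle-2\rangle$ compatible with such a polarization. The sporadic cases, which are not shortenings of the five main ancestors, are produced and classified via projection from a chosen node, whose image is a nodal surface of degree $d-2$; they are recovered by inductive backtracking along this projection. The main obstacle is precisely this last enumeration: showing that the five listed main-stream ancestors together with the sporadic list truly exhaust all maximal realizable codes is delicate and requires a case-by-case Nikulin discriminant analysis of all primitive overlattices, with close attention to the boundary between main-stream and sporadic behaviour.
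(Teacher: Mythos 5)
A preliminary remark: Theorem \ref{mtK3} is not proved in this paper at all; it is recalled, in abridged form, from \cite{nodal}, and the introduction only indicates the ingredients of the proof given there: the Torelli theorem \cite{torelli} and surjectivity of the period map, Nikulin's criteria for existence and uniqueness of primitive embeddings of the lattice $\sL'$ determined by $\sK'$ into the K3 lattice $\Lambda$ \cite{nikulin}, and the translation between partial smoothings and code shortenings via unobstructedness (Theorem \ref{thm_d_realized}). Your outline follows exactly this route, so at the level of strategy you are aligned with the source.

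As a proof, however, your proposal has genuine gaps, essentially at the two places where the real work lies. First, for the bijection between connected components and codes, ``at most one primitive embedding of $\sL'$ into $\Lambda$'' plus surjectivity of the period map does not yet give connectedness or non-emptiness of a stratum: one must show that the locus of polarized K3's with prescribed saturated lattice $\sL^{sat}$, with $H$ ample, and with exactly the $\nu$ prescribed $(-2)$-classes effective (no extra nodes) is non-empty and connected; this requires working in the complement of a hyperplane arrangement in the period domain and controlling the action of the relevant reflection/isometry group, none of which appears in your sketch. Second, the actual content of the theorem --- that the ancestors are precisely the five listed codes, distributed according to $d \bmod 8$, and that everything else is among the sporadic cases handled by projection from a node --- is exactly the step you defer as ``delicate'' and do not carry out. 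The arithmetic driving the $d \bmod 8$ dichotomy is the congruence $4 \mid \bigl(t - \tfrac{d}{2}\bigr)$ for the supports of vectors of $\sK' \setminus \sK$ (Proposition \ref{extended-N=2}; Prop.\ 2.11 of \cite{babbage}) together with $8 \mid t$ for vectors of $\sK$ and the B-inequality of Corollary \ref{binarycode}; your appeal to ``the Kummer lattice in its largest symmetric realization'' only gestures at this, and the exhaustiveness of the list of maximal codes (ruling out any other primitive overlattice compatible with a polarization of square $d$, and delimiting the sporadic cases) is asserted rather than proved. So what you have is a correct plan reproducing the method of \cite{nodal}, not yet a proof of Theorem \ref{mtK3}.
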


The above Theorems, yielding five  genealogy trees,  suggest the possibility (dream?)  of understanding  the equisingular components of the space of 
polarized singular K3 surfaces, whose singularities are  Rational Double Points = ADE Singularities,
in terms of suitable ancestors, which should replace the role taken by  the 16-nodal Kummer quartic surfaces
for the case of nodal quartics. This should work for quartic surfaces with ADE singularities without 
the need to consider sporadic cases. In this direction, we plan  in the sequel to this paper to see whether 
it is possible to combine
 the methods of this article with other 
  methods and results (\cite{urabe}, \cite{urabe2} and \cite{gpp}), where some ADE configurations
 on quartic  surfaces and on unpolarized K3 surfaces were  investigated. 

An ancestor is a component which is closed, hence with a maximal set of singularities, 
and each ancestor   determines a genealogy tree of components which have the ancestor in their closure.

Since in the case of nodal surfaces the incidence relation between components (being contained in the closure 
of the other) is determined by the property that the associated binary codes are the second a shortening of the first,
our purpose in this article is to describe a generalized coding theory achieving this purpose for 
 surfaces with ADE singularities.
 And our investigation is directed towards all surfaces with ADE singularities, also those of higher degree.

Our first and main goal in this article will then be  to introduce  a new coding theory for normal surfaces with singularities of a fixed type,
describing  in greater  generality the concepts we use. 

We shall restrict later on  to the case of codes associated to surfaces with ADE singularities, because
these singularities  are amenable
to an easier treatment: here  the  geometric ideas used to describe a more restricted class of shortenings,
which we call geometric driven, lead easily  to a complete description and classification.

We hope that this  first general algebraic step may    be found interesting and useful 
 also for other applications than the geometric ones which motivated us. \footnote{ It was pointed out by Sascha Kurz that codes with alphabet
being a ring have been studied, see \cite{4names}, \cite{3names}; but our notion is quite more general: forgetting 
much of the information about $\sK$, remains a code $$trace (\sK) \subset (G_1 \times G_2 \times \dots \times G_n),$$
where the $G_i$'s are finite abelian groups.}

We illustrate now  the contents of the paper and some of  our main results.

We begin recalling some general results for a normal complex space with isolated singularities, implying that
under suitable assumptions  the 
first homology group of the smooth part is a quotient of the direct sum of the local homology groups
at the singular points, see Theorem \ref{jdg} and Proposition \ref{H_1-pres}.

Then we focus on the local fundamental groups and local homology groups of ADE-singularities, 
which are going to be crucial for our results. 

We describe then, as a warm up,  the general picture of a normal crossing configuration of rational curves on a complex surface, 
and the concept of geometric shortening as the procedure of deleting some of the rational curves from the
configuration. We illustrate the effect on the  local first homology groups, illustrating a few examples.

These examples are to be seen as a key  grasp to understanding the general abstract definitions
which are later given,  of labeled generalized codes, and their shortenings.

We then concentrate on the case where the codes are finite, in this case we can define the concept of the 
weight and refined weight
of a vector, the latter 
 amounts to the number of times where some coordinates are equal and have the same labeling.

In the subsequent  section we define a special class of shortenings, which we call geometric driven, and 
show that for unobstructed surfaces these are the algebraic counterpart to a partial smoothing
of some of the singular points (Theorem \ref{thm_d_realized}). We also establish, in  Theorem \ref{singK3},
the unobstructedness
of K3 surfaces with ADE singularities.

In section 5 we restrict to ADE-codes, and, using the primary decomposition, we prove several
results (Propositions \ref{N=2},  \ref{N=3}, \ref{N>3}, \ref{N=5}) yielding restrictions for the weights of the primary components, namely for the weights
of vectors of prime order, the most important cases being the orders $N=2$, $N=3$, $N=5$, and,
in general,  the almost simple case.

The case $N=3$ reminds us of the work of Barth and Rams \cite{b-9}, \cite{b-8}, \cite{b-r-1},  \cite{b-r-2},
who considered the case of surfaces with $A_2$ singularities, cusps in their terminology, which 
lead to codes $\sK$ over the field $\FF_3 = \ZZ/3$, which they investigated for low degree surfaces. 

Later on, we extend to our general setting, using the binary component of the code, the inequality 
observed  by Beauville in \cite{angers}.

We proceed then with the important step of  establishing in this general context the theory of the extended code $\sK'$, which keeps track
of the polarization of the projective surface $Y$, and whose underlying group is the dual
of the first homology of the complement, inside the smooth locus, of a smooth hyperplane section $H$. 

The extended code, as already illustrated,
plays the key role in the theory, since  it  allows to compute the saturation,
inside the cohomology  $\Lambda:= H^2(S, \ZZ)$ of the minimal resolution $S$ of $Y$, 
of the sublattice $\sL$ generated by the exceptional curves and by a smooth hyperplane section (Theorem \ref{codehomology2}). 

We provide then some restrictions for the weights also for vectors of the extended code $\sK'$.

  In the last section we conclude showing  some  examples, describing for instance the surfaces which  are  the ancestors for the cubic surfaces 
 in $\PP^3$, their codes, and then indicating some ancestors for non-polarized K3 surfaces;
 we hope, in the sequel to this article,  to find more substantial applications.

 \bigskip
 
{\bf Acknowledgements:} thanks to Matteo Penegini for pointing out some results of his joint paper \cite{gpp}.
A good part of this work was done as   KIAS Scholar  in November 2023: thanks to Jong Hae Keum
for hospitality and useful conversations.

 \bigskip

  \bigskip
 
 \bigskip

\section{Local to global homology  of Normal compact complex spaces  $X$ with isolated singularities}

Let $X$ be a  normal compact complex space  of dimension $n$ with isolated singularities,   let 
$$s : = s(X): = | Sing(X)|$$ be the number of its singular points, and  denote further by $\Sigma$ the  singular set  of $X$
$$ \Sigma : = Sing (X) = \{ P_1, \dots, P_{s}\}.$$

The topological structure of the smooth part $X^* : = X  \setminus Sing(X)$ is an important invariant of $X$, which
is not changed by equisingular deformations (see \cite{wahl}); in particular the fundamental group $\pi_1 (X^*)$
and its Abelianization, the first homology group $H_1(X^*, \ZZ)$, are two basic such invariants.

Let $S$ be a minimal resolution of singularities of $X$, and let $H$ be a smooth divisor in $X$ not passing through the singular points 
of $X$ (in the case where $X$ is projective, $H$ shall be chosen to be a very ample  divisor). 

Recall  the following result  of  \cite{modulispace}, pages  489-490, which shall be applied in two important cases:
\begin{itemize}
\item
i) the case where $S$ is a minimal resolution of singularities of $X$, and $D$ is the 
reduced exceptional divisor $E$ of $p : S \ra X$;
\item
ii) ditto with $D$ the union of $E$ with the inverse image of $H$.
\end{itemize}

\begin{theo}\label{jdg}
Let $S$ be a smooth compact complex manifold of complex dimension $n$, and let $D = D_1 \cup \dots \cup D_{\nu}$ 
be a reduced divisor,
where each $D_i$ is irreducible. 

Then the surjection  $ H_1( S \setminus D , \ZZ) \twoheadrightarrow H_1( S  , \ZZ)$ has kernel equal to the cokernel of
$ \rho : H^{2n-2} (S, \ZZ) \ra  H^{2n-2} (D, \ZZ) = \oplus_1^{\nu} \ZZ [D_i]$.

For $n=2$, $\rho(L) = \sum_i (L \cdot D_i) [D_i]$.
\end{theo}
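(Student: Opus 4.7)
The plan is to derive the result from the long exact sequence of the pair $(S, S\setminus D)$ combined with Poincaré--Lefschetz duality. Writing the relevant portion,
\[
H_2(S;\ZZ) \xrightarrow{\alpha} H_2(S, S\setminus D;\ZZ) \xrightarrow{\partial} H_1(S\setminus D;\ZZ) \xrightarrow{j_*} H_1(S;\ZZ) \to H_1(S, S\setminus D;\ZZ),
\]
the identification of $\ker(j_*)$ reduces to computing the two relative groups $H_k(S, S\setminus D;\ZZ)$ for $k=1,2$ and recognising the map $\alpha$.

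Since $S$ is a compact oriented $2n$-manifold and $D$ is a closed subset, Poincaré--Lefschetz duality for closed subsets of an oriented manifold gives $H_k(S, S\setminus D;\ZZ) \cong H^{2n-k}(D;\ZZ)$. Because $D$ is a reduced compact analytic set of pure complex dimension $n-1$, and hence real dimension $2n-2$, one has $H^{2n-1}(D;\ZZ)=0$, which re-proves surjectivity of $j_*$, and $H^{2n-2}(D;\ZZ) = \bigoplus_{i=1}^{\nu} \ZZ\,[D_i]$, freely generated by the fundamental classes of the irreducible components. Combining with Poincaré duality $H_2(S;\ZZ) \cong H^{2n-2}(S;\ZZ)$ and the naturality of the duality pairing, the map $\alpha$ is identified with the restriction homomorphism $\rho \colon H^{2n-2}(S;\ZZ) \to H^{2n-2}(D;\ZZ)$. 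Exactness of the pair sequence then yields
\[
\ker\bigl(H_1(S\setminus D;\ZZ) \twoheadrightarrow H_1(S;\ZZ)\bigr) \;=\; \im(\partial) \;=\; H^{2n-2}(D;\ZZ)/\im(\rho) \;=\; \operatorname{coker}(\rho),
\]
as claimed.

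For the final assertion with $n=2$, each coordinate of $\rho(L) \in \bigoplus_i \ZZ\,[D_i]$ is, by definition, the evaluation $\langle L|_{D_i},[D_i]\rangle$ of the restriction of $L$ on the fundamental class of $D_i$, and this is precisely the intersection number $L\cdot D_i$; hence $\rho(L) = \sum_i (L\cdot D_i)[D_i]$.

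The step I expect to be most delicate is the description $H^{2n-2}(D;\ZZ) = \bigoplus_i \ZZ\,[D_i]$, because in full generality each $D_i$ may itself be singular and the $D_i$'s may meet along positive-dimensional analytic subsets. One handles this by noting that the pairwise intersections $D_i \cap D_j$ have strictly smaller real dimension, and that for each irreducible compact analytic space the normalisation is a connected compact oriented real $(2n-2)$-manifold whose top cohomology class survives the Leray spectral sequence of the finite normalisation map; an inclusion--exclusion / Mayer--Vietoris argument then assembles the fundamental classes $[D_i]$ into a free basis. Modulo this standard but slightly technical point, the rest of the argument is a direct application of the two duality theorems to a single long exact sequence.
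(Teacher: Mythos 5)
Your argument is essentially the paper's: both proofs are ``duality plus the long exact sequence of a pair''. The source cited for Theorem \ref{jdg} (and the sketch reproduced in the proof of Proposition \ref{H_1-pres}) replaces $D$ by a good tubular neighbourhood $V$, applies Lefschetz duality to the manifold with boundary $S\setminus V$, and uses excision to identify $H_1(S\setminus D,\ZZ)$ with $H^{2n-1}(S,D,\ZZ)$, then reads off the cokernel of $\rho$ from the cohomology sequence of the pair $(S,D)$ --- a step which, as written there, uses the extra hypothesis $H_1(S,\ZZ)=0$ to kill $H^{2n-1}(S,\ZZ)$. Your packaging via the homology sequence of $(S,S\setminus D)$ together with Alexander--Lefschetz duality $H_k(S,S\setminus D;\ZZ)\cong H^{2n-k}(D;\ZZ)$ isolates the kernel of $H_1(S\setminus D,\ZZ)\ra H_1(S,\ZZ)$ directly, so it yields the statement in the generality of Theorem \ref{jdg} (no hypothesis on $H_1(S)$) and avoids having to produce a regular neighbourhood of the singular set $D$; the identification of the map $H_2(S)\ra H_2(S,S\setminus D)$ with the restriction $\rho$ is indeed the standard compatibility of cap product with the fundamental class, and the computation $\rho(L)=\sum_i(L\cdot D_i)[D_i]$ for $n=2$ is correct.

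One point in your treatment of the ``delicate step'' is wrong as stated: the normalisation of an irreducible compact analytic space of dimension $\geq 2$ is normal but in general \emph{not} smooth, so it need not be a real $(2n-2)$-manifold, and the Leray argument you sketch does not apply literally (it does work for $n=2$, since normal curves are smooth). The fact you need, $H^{2n-2}(D;\ZZ)\cong\oplus_1^{\nu}\ZZ\,[D_i]$, is nevertheless true and standard: let $\Sigma\subset D$ be the union of the singular loci of the $D_i$ and of the pairwise intersections $D_i\cap D_j$; then $\dim_{\RR}\Sigma\leq 2n-4$, so $H^{2n-2}(\Sigma)=H^{2n-3}(\Sigma)=0$, and the exact sequence of the pair $(D,\Sigma)$ gives $H^{2n-2}(D;\ZZ)\cong H^{2n-2}(D,\Sigma;\ZZ)\cong H^{2n-2}_c(D\setminus\Sigma;\ZZ)$, which is free with one generator for each connected component of $D\setminus\Sigma$, i.e.\ one for each $D_i$ (the smooth locus of an irreducible analytic set being connected). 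Replacing your normalisation argument by this stratification argument (or by passing to a resolution of each $D_i$) closes the only gap; the rest of the proof stands.
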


Given the singular point $P_i$, let $U_i$ be a neighbourhood  of $P_i$ obtained by  intersecting $X$ with a small 
Euclidean ball with centre $P_i$,  set $U_i^* : = U_i \setminus \{P_i\}$
and consider  the local fundamental group
$$ \pi_{1, loc} (X_{P_i} ): = \pi_1( U_i^*) = : \Ga_i ,$$ 
whose abelianization is called the local homology group 
$$ H_1(P_i) : = H_{1, loc} (X_{P_i} , \ZZ): = H_1( U_i^*, \ZZ) = ( \Ga_i )^{ab}.$$ 

We focus on the natural local to global homomorphism 

\begin{equation}\label{global}
gl : [ \oplus_{P_i \in Sing(X)}  H_1(P_i) = H_{1, loc} (X_{p_i} , \ZZ) ] \ra H_1 (X^*, \ZZ) =: H_1.
\end{equation} 

This homomorphism $gl$ will be particularly interesting in the case where the local homology groups are finite and 
also $H_1 (X^*, \ZZ)$ is finite.

The following proposition is proven in \cite{nodal}.
\begin{prop}\label{H_1-pres}
 (i) Let $X$ be a  normal compact complex space  of dimension $n \geq 2$ with isolated singularities,
 with a normal crossing divisor resolution of singularities $S$ having $ H_1 (S, \ZZ)=0$.
 
 Then the  local to global homomorphism $gl$ (see \eqref{global}) is surjective, and its image $H_1 : = H_1 (X^*, \ZZ)$ is isomorphic to $Coker (\rho)$,
 hence fits into an exact sequence
 $$ H^{2n-2} (S, \ZZ) \ra  H^{2n-2} (D, \ZZ) = \oplus_1^{\nu} \ZZ [D_i] \ra H_1 \ra 0.$$
 
(ii) The same is true  if $X$ admits a smoothing (a flat deformation over the disk in $\CC$ with general fibre a smooth manifold $Y$),
 such that the general fibre $Y$ is simply connected, and the Milnor fibres are 1-connected. This happens for instance if $X$ is a complete intersection in $\PP^N$.
 
  In this latter case we have the  stronger result that  
$\pi_1 (X^*)$ is normally generated by a quotient of the free product $\Ga_1 * \Ga_2 * \dots *  \Ga_{\nu}$ of the local fundamental groups at the singular points.

\end{prop}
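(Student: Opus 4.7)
The plan is to derive both parts from Theorem \ref{jdg} applied to $D = E$, the reduced exceptional divisor of a resolution $p : S \to X$, together with a topological identification of the connecting map $\oplus_j \ZZ[D_j] \to H_1(S \setminus E, \ZZ)$ with the local-to-global map $gl$ via meridians. Since $p$ restricts to a biholomorphism $S \setminus E \to X^*$, we have $H_1(X^*, \ZZ) = H_1(S \setminus E, \ZZ)$, and Theorem \ref{jdg} yields the four-term exact sequence
\begin{equation*}
H^{2n-2}(S, \ZZ) \xrightarrow{\rho} \bigoplus_{j=1}^{\nu} \ZZ[D_j] \xrightarrow{\mu} H_1(X^*, \ZZ) \longrightarrow H_1(S, \ZZ) \to 0.
\end{equation*}
The key topological observation is that each generator $[D_j]$ is represented by a small meridian $m_j$ around $D_j$ in $S \setminus E$; this meridian sits inside a tubular neighbourhood of the connected component $E_{i(j)}$ of $E$ contracted to the singular point $P_{i(j)}$, and that tubular neighbourhood minus $E_{i(j)}$ deformation retracts onto the link $L_{i(j)}$ of $P_{i(j)}$. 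Hence $m_j$ lies in the image of $H_1(U^*_{i(j)}) \to H_1(X^*)$, so $\mu$ factors through $gl$ and $\operatorname{im}(\mu) \subseteq \operatorname{im}(gl)$.

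For (i), the hypothesis $H_1(S, \ZZ) = 0$ makes $\mu$ surjective, so $\operatorname{im}(gl) = H_1(X^*, \ZZ) = \operatorname{Coker}(\rho)$, giving both the surjectivity of $gl$ and the stated exact sequence. For (ii), I would identify $Y$ topologically as $X^* \cup (\sqcup_i F_i)$, glued along the links $L_i \simeq \partial F_i$; Milnor's local conic structure theorem gives $L_i \hookrightarrow U_i^*$ as a homotopy equivalence, so $\pi_1(L_i) \cong \Gamma_i$. Van Kampen applied to this decomposition, combined with $\pi_1(Y) = 1 = \pi_1(F_i)$, collapses to
\begin{equation*}
1 \,=\, \pi_1(Y) \,=\, \pi_1(X^*) \big/ \La \operatorname{im}(\Gamma_1), \ldots, \operatorname{im}(\Gamma_\nu) \Ra,
\end{equation*}
so $\pi_1(X^*)$ is normally generated by the images of the $\Gamma_i$'s, equivalently by a quotient of the free product $\Gamma_1 * \cdots * \Gamma_\nu$, which is the last assertion; abelianizing gives surjectivity of $gl$. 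Since each meridian $m_j$ bounds a small transverse disk in $S$, $\mu$ followed by $H_1(X^*) \to H_1(S)$ vanishes, and surjectivity of $gl$ then forces the map $H_1(X^*) \to H_1(S)$ of Theorem \ref{jdg} to be zero, so again $H_1(X^*, \ZZ) \cong \operatorname{Coker}(\rho)$ with the stated exact sequence.

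The main obstacle is the topological identification $Y \simeq X^* \cup_{L_i} F_i$ in part (ii): one must verify that, in a smoothing family $\mathcal{X} \to \Delta$, a nearby smooth fibre is reconstructed from the ``thick'' part $X^*$ by gluing in the local Milnor fibres along their common link boundaries. For complete intersections this is a classical consequence of the simultaneous Milnor fibration, and the $1$-connectedness hypothesis on the $F_i$'s is precisely what makes the van Kampen pushout collapse in one step to the clean normal-generation statement.
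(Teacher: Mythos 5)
Your part (i) and the van Kampen argument in part (ii) follow essentially the same route as the paper. For (i), the paper derives generation of $H_1(S\setminus D,\ZZ)$ by meridian classes from the fact that the kernel of $\pi_1(S\setminus D)\ra\pi_1(S)$ is normally generated by geometric loops around the $D_i$ (plus $H_1(S,\ZZ)=0$), and identifies this group with $\mathrm{Coker}(\rho)$ by Lefschetz duality and excision; you instead read Theorem \ref{jdg} directly as the four-term exact sequence with the meridian map and observe that meridians factor through the local groups $H_1(P_i)$ — the same content, organized slightly differently, and consistent with how the paper itself uses Theorem \ref{jdg} later (e.g.\ in Theorem \ref{codehomology2}). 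For (ii), your decomposition of the nearby smooth fibre into the part diffeomorphic to $X^*$ and the Milnor fibres glued along the links, with $\pi_1(L_i)\cong\Ga_i$ by the conic structure, and van Kampen together with $\pi_1(Y)=1=\pi_1(F_i)$, is exactly the paper's proof; the paper only adds a connecting tree to package the statement as a single free product $\Ga_1*\cdots*\Ga_\nu$, a bookkeeping point.

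The genuine problem is the last step of your part (ii), where you claim that surjectivity of $gl$ forces $H_1(X^*,\ZZ)\ra H_1(S,\ZZ)$ to vanish, hence that $H_1(X^*,\ZZ)\cong \mathrm{Coker}(\rho)$ also under hypothesis (ii). Surjectivity of $gl$ says that $H_1(X^*,\ZZ)$ is generated by the images of the \emph{full} local groups $H_1(P_i)$, not by the meridian classes alone; the composite $H_1(P_i)\ra H_1(X^*,\ZZ)\ra H_1(S,\ZZ)$ factors through $H_1(E_{P_i},\ZZ)\ra H_1(S,\ZZ)$, which need not vanish when the exceptional divisor has components of positive genus (or cycles). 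Concretely, let $X\subset\PP^3$ be the projective cone over a smooth plane curve $C$ of degree $d\geq 3$ (a hypersurface, so hypothesis (ii) holds: the general degree-$d$ surface is simply connected, the Milnor fibre is $1$-connected), and let $S$ be the blow-up of the vertex, so the exceptional divisor $E\cong C$ has genus $g\geq 1$ and $H_1(S,\ZZ)\cong\ZZ^{2g}\neq 0$. Here $X^*=S\setminus E$ is an affine-line bundle over $C$, so $H_1(X^*,\ZZ)\cong\ZZ^{2g}$, and $gl$ is surjective (the link is a circle bundle over $C$), but a ruling fibre $F$ satisfies $F\cdot E=1$, so $\rho$ is onto and $\mathrm{Coker}(\rho)=0\neq H_1(X^*,\ZZ)$. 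So the identification with $\mathrm{Coker}(\rho)$ genuinely requires $H_1(S,\ZZ)=0$ and cannot be recovered from hypothesis (ii) alone; accordingly, the paper's own proof of (ii) establishes only the normal-generation statement and the resulting surjectivity of $gl$ after abelianization (which is the part of your argument that is correct), and ``the same is true'' there should be read as referring to that conclusion, not to the cokernel description.
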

\begin{proof}

Concerning (i), we apply Theorem \ref{jdg} by slightly changing the notation.
Namely, for each $P \in Sing(X)$, we let $E_P$ be the exceptional divisor, so that $D $ is the disjoint union of the
divisors $E_P$.

Then, as in \cite{modulispace} loc. cit. $H_1 (X^*, \ZZ) = H_1 (S \setminus D, \ZZ)= H_1 (S \setminus V, \ZZ)$,
where $V$ is a tubular neighbourhood of $D$.

Since the kernel of the surjection $\pi_1(S \setminus D) \ra \pi_1(S)$ is normally generated by small
geometrical  loops $\ga_i$ around the (irreducible) divisors $D_i$, it follows that 
$H_1 (S \setminus D, \ZZ)$ is generated by the homology classes $[\ga_i]$ of these loops. These classes $[\ga_i]$
come from $H_1 (V_P  \setminus E_P, \ZZ) = H_{1, loc} (X_{P})$.

In turn, by Lefschetz duality, $H_1 (S \setminus V, \ZZ) \cong H^{2n-1} ( S \setminus V, \partial  V, \ZZ)$
which by excision equals $H^{2n-1} ( S, \bar{  V}, \ZZ) \cong H^{2n-1} ( S , D, \ZZ)$
which equals then the Cokernel of $\rho : H^{2n-2} ( S,  \ZZ) \twoheadrightarrow  H^{2n-2} ( D,  \ZZ)$.

For the proof of (ii),  observe that $X$ deforms to a smooth manifold  $Y$, with  $\pi_1 (Y) = 1$; this is a consequence of
  Lefschetz' theorem, if $X$ is a complete intersection of dimension $n \geq 2$.

Define $F$ to be the union of the Milnor fibres $F_i$, together with a tree connecting the base point  to each  
Milnor fibre with a respective segment (in the case where $X$ is a complete intersection that  $\pi_1(F) = 1$ 
follows by Hamm's extension \cite{hamm} of Milnor's theorem \cite{milnor}
stating   that $F_i$
is homotopically equivalent to a bouquet of spheres). 

The first van Kampen 's theorem says that $ 1 = \pi_1 (Y') $ is the quotient of the free product $\pi_1 (Y^*) * \pi_1(F) = \pi_1 (Y^*)$
by the subgroup normally generated by $\pi_1 (F^*)$, where $F^* : = F \cap Y^*$. Now, by construction, $\pi_1 (F^*)$
is the free  product $\Ga_1 * \Ga_2 * \dots *  \Ga_{\nu}$, hence the desired assertion follows.

When we abelianize, we obtain a surjection of $\Ga_1 * \Ga_2 * \dots *  \Ga_{\nu}$ onto  $H_1(Y^*, \ZZ)$, which therefore factors through a surjection
$$\Ga^{ab} _1 \times  \Ga^{ab} _2 \times \dots \times  \Ga^{ab}_{\nu}  \twoheadrightarrow H_1(Y^*, \ZZ).$$

\end{proof}

\subsection{The case of normal surfaces} 
Let us  consider first  the simplest possible case, where $X$ is a nodal surface, that is, all singular points of $X$ are
{\bf nodes}, hypersurface singularities of multiplicity two with nondegenerate Hessian (hence locally biholomorphic to the singularity
$y_1 y_2 -  y_3^2  = 0$ inside $\CC^3$).

A node is the quotient  $\CC^2 / (\pm 1)$, since, if $u_1, u_2$ are local coordinates on $\CC^2$, the quotient is embedded by $y_i : = u_i^2,
\ i=1,2$ and by $y_3 : = u_1 u_2$. 

In particular, the local fundamental group is then $\ZZ/2$.

\begin{example}\label{2}
If $X$ is a nodal surface in $\PP^3$, with $ \Sigma = Sing (X) = \{ P_1, \dots, P_s\},$ then $H_1(X^*, \ZZ) \cong (\ZZ/2)^k$,
and there is a natural surjection $\oplus_1^{s}  (\ZZ/2) \twoheadrightarrow H_1(X^*, \ZZ).$
\end{example}

In order to justify our forthcoming definitions of generalized codes let us look at the example of 
the binary codes associated to nodal surfaces.

\begin{example}
Let $X$ be a nodal surface in $\PP^3$: then its binary code $\sK \subset (\ZZ/2)^{s}$ is defined as the 
image of the injective  map dual to
 the surjection $$\oplus_1^{s}  (\ZZ/2) \twoheadrightarrow H_1(X^*, \ZZ).$$
\end{example}

\begin{example}\label{general-normal-surface}
Take  more generally  $X$ to be a normal surface  whose  singularities are rational double points (ADE singularities), 
and whose minimal resolution
$S$ is simply connected.  Since $X$ deforms differentiably to $Y = S$, see for instance  \cite{cat-sympl}, saying that $S$ 
is simply connected is equivalent to say that $X$ has a simply connected smoothing.

In this case again $gl$ is surjective. 

Moreover, since $S$ is simply connected, then $$ \Lambda : = H^2(S, \ZZ)$$
is a unimodular lattice, that is, a  free abelian group endowed (by Poincar\'e duality) with   a unimodular bilinear form.

Set $\sL : = \oplus_1^{\nu} \ZZ D_i $: since the intersection matrix $(D_i \cdot D_j)$  is negative definite, then 
$ \sL \hookrightarrow \Lambda$.

\end{example}

\subsection{Local homology groups of ADE-singularities, also called Rational Double Points and Kleinian Singularities}
Following for instance the notation of \cite{cetraro}, pages 71-74, we consider the Kleinian singularities 
$$ X_x = \CC^2/G, \ G < SU(2, \CC).$$

Let $G' < \PP SU(2, \CC) \cong SO(3)$ be the image of $G$, and let $\hat{G}$ be the full inverse image of $G'$ through the exact sequence
$$ 1 \ra \{ \pm 1\} \ra SU(2, \CC) \ra \PP SU(2, \CC) \ra 1,$$
so that 
$$ 1 \ra \{ \pm 1\} \ra \hat{G}\ra G' \ra 1.$$

Clearly, $\pi_{1, x} (P) \cong G$, while $H_1(x) \cong (G)^{ab}$, according to the following table \footnote{about which we shall provide more details in the ensuing subsections.}
($H_1(x) \cong (G')^{ab}$ except for the case of $A_n$ with $n$ odd).

\begin{itemize}
\item
 For a singularity of type $A_n$,  $G = H_1(x) = \ZZ/(n+1)\ZZ $.
\item
For a singularity of type $D_{2m+1}$, $G $ is the binary dihedral group,  $G'$ is the dihedral group, but
with a shift of index,
 $$G = \hat{D}_{2m-1},\  G' =  D_{2m-1}, \ H_1(x) = \ZZ/ 4 \ZZ .$$
\item
 For a singularity of type $D_{2m}$,  $$G = \hat{D}_{2m-2}, \ G' =  D_{2m-2}, \ H_1(x) =(\ZZ/2\ZZ )^2$$
\item
 For a singularity of type $E_{6}$ $$G = \hat{\mathfrak A_4}, \ G' = \mathfrak A_4, \ H_1(x) =\ZZ/3 \ZZ .$$
\item
 For a singularity of type $E_{7}$ $$G = \hat{\mathfrak S_4}, \ G' = \mathfrak S_4, \ H_1(x) =\ZZ/2 \ZZ .$$
\item
For a singularity of type $E_{8}$ $$G = \hat{\mathfrak A_5}, \ G' = \mathfrak A_5, \ H_1(x) = 0.$$
\end{itemize}

\subsection{Local homology of normal surface singularities and groups of normal crossing curve configurations}

Let $x \in X$ be (the germ of) a normal surface singularity, and $p : S \ra X$ a relatively minimal (global) 
normal crossing resolution of singularities, so that $p^{-1}(x)$ is a connected normal crossing divisor \footnote{ When dealing with a singularity of type $D_n$,
we shall switch notation to $ D = \cup_i C_i$.}
$$ D = \cup_i D_i$$
such that the intersection matrix $(D_i \cdot D_j)$ is negative definite (this implies that $K_s \cdot D_i \geq 0$,
as noted in \cite{zappa}, from which we borrow for the following facts).

Let $T= T(D)$ be a good tubular neighbourhood of $D$, so that $ (T\setminus D ) \cong X \setminus \{x\}$.

The local fundamental group $\pi_1 ( X \setminus \{x\}) = \pi_1 ( T\setminus D )$ can be computed
as a group with generators and relations, see \cite{zappa} pages 140 and following.

In view of the surjection $ \pi_1 ( T\setminus D ) \twoheadrightarrow   \pi_1 (  D )$ if we assume that the local first homology group
$H_1(x) : = H_1(  X \setminus \{x\}, \ZZ)$ is finite, it follows that each $D_i$ is a smooth rational curve,
and we have a graph of rational curves with zero  first homology (a tree).

Under these two assumptions, we have a presentation, where we associate a generator $\ga_i$ to each component $D_i$,
 a generator $\ga_{ij}$ for each point $D_i \cap D_j$, and we set $- m_i : = D_i^2$:

$$ \pi_1 ( T\setminus D )  = \langle \ga_i,  \ga_{ij} | \ga_{ij} = \ga_{j}, \ga_i^{m_i} = \prod_j \ga_{ij}, [\ga_i,\ga_{ij}]=1\rangle .$$
We have $$H_1(x) = \pi_1 ( T\setminus D )^{ab} = (\oplus \ZZ \ga_i ) / (\oplus \ZZ \ga_j )^{\vee},$$
in other words, $H_1(x)$ is the cokernel of the intersection matrix. 

Hence it is a finite abelian group  if $(D_i \cdot D_j)$ is negative definite.

The Rational Double Points, also called  ADE singularities, yield a case where we have a graph of smooth rational curves
with $m_i=2, \forall i$.

The above calculation yields not only the local homology group $H_1(x)$, but also a set of generators $\ga_i$.

\begin{enumerate}
\item
$A_n$ corresponds to a linear  tree of $n$ rational curves, and here we have as generator $\ga_1$, and the relations
amount to $\ga_j = j \ga_1$, and to  $ 2 \ga_n = \ga_{n-1} \Leftrightarrow (n+1) \ga_1=0.$
\item
$D_n$ corresponds to a  linear tree of $(n-1)$ rational curves, plus $\ga_n$ intersecting $\ga_{n-2}$ 
(this means, by abuse of notation, that the curve $C_n$ corresponding to $\ga_n$ intersects $C_{n-2}$):
here  the relations
amount to $\ga_j = j \ga_1$, for $j \leq n-2$,  and to  $$ 2 \ga_n = 2 \ga_{n-1} = \ga_{n-2} = (n-2) \ga_1,
(n-1) \ga_1 =\ga_{n} + \ga_{n-1}, $$ $$ \Leftrightarrow \ga_{n-1} = \ga_1 + \ga_n, 2 \ga_1 =0, 2 \ga_n = (n-2) \ga_1,$$
hence we get $H_1(x) = \ZZ/4 $ for $n$ odd, with  generator $\ga_n$, and $H_1(x)= \ZZ/2 \oplus \ZZ/2$ for $n$ even,
with generators $\ga_1, \ga_n$.
\item
For $E_6$ we get a  linear tree of five rational curves, corresponding to $\ga_1, \ga_2, \ga_3, \ga_5, \ga_6$, 
and $\ga_4$ intersecting $\ga_3$.

 We get $H_1(x)= \ZZ/3$ with generator $\ga_1= \ga_5$, and moreover  $\ga_2 = \ga_6= 2 \ga_1$,
$\ga_3 = \ga_4=0$.
\item
For $E_7$ we get a  linear  tree of six rational curves, corresponding to $\ga_1, \ga_2, \ga_3, \ga_4, \ga_5, \ga_6$, 
and $\ga_7$ intersecting $\ga_4$. 

We get $H_1(x)= \ZZ/2$ with generator $\ga_1= \ga_7 = \ga_3$, and moreover $\ga_2 =\ga_4= \ga_5= \ga_6= 0$.
\item
For $E_8$ we get a  linear  tree of seven rational curves, corresponding to $\ga_1, \ga_2, \ga_3, \ga_4, \ga_5, \ga_6, \ga_7$, 
and with $\ga_8$ intersecting $\ga_5$. We get $H_1(x)= 0.$

\end{enumerate}

Looking at the $p$-primary components, we see that for $ p>3$ we get only a contribution from $A_n$, with $ p | (n+1)$
(hence $n \geq 4$), for $p=3$ only $A_{3k-1}$ and $E_6$, for $p=2$: $A_{2m+ 1}$ , $D_n$,  $E_7$.

\subsection{Elementary geometric shortenings}
If we have as above a global normal crossing divisor $D = \cup D_i$ with tubular neighbourhood $T= T(D)$,
we consider the operation of deleting one curve $D_i$ from $D$, obtaining $D' := D(i)  : = \cup_{j \neq i} D_j$, a union of connected 
normal crossing divisors. 

We set  $T' : = T \setminus D'$, which contains a union $T(i)$ of tubular neighbourhoods
of the components of $D' = D(i)$.

Assume now that all the curves $D_i$ are smooth rational.

Clearly $\pi_1(T \setminus D) $ surjects onto $\pi_1(T \setminus D') $ and the  kernel is normally generated
by $\ga_i$, hence the same holds true for their abelianizations,
hence $H_1(x) : = H_1 (T \setminus D)$ surjects onto $H_1' (x) := H_1(T \setminus D') $.

On the other hand, also $ H_1( T(i) \setminus D(i))$ surjects onto $H_1' (x) := H_1(T \setminus D') $ by the Mayer-Vietoris exact sequence, and because  we are adding a disk bundle over the punctured curve $D_i \setminus (\cup_{j\neq i} D_j)$,
whose first homology is generated by the elements $\ga_{ij}$, which are in the image of  $H_1( T(i) \setminus D(i))$.

Therefore  $H'_1(x)$, a quotient of $H_1(x)$, is also a quotient of $H_1( T(i) \setminus D(i))=:\oplus_y H'_1(x,y)$, where 
$y$ runs through the connected components of $ T(i) \setminus D(i)$ (the cumbersome notation is clear in the case where $D$
is the exceptional normal crossing divisor in the resolution of a singular point $x$ of a normal surface, and the points $y$
are the singular points of a partial smoothing of the singularity $x$).

\begin{ex}\label{A_n}
Consider the case of $A_n$-singularities, $$A_n : = \{ z^2 - x^2 + y^{n+1}=0\}.$$

The semiuniversal deformation of the singularity consists of the surfaces of equation 
$$ z^2 - x^2 + P(y) =0, \ P = y^{n+1} + a_{n-1} y^{n-1} + \dots + a_0,$$
whose singular set is the set $\{ z=x= P(y) = P_y(y)=0\}$, hence consists of the points $(0,0,\eta)$, where $\eta$ is a multiple
 root of $P(y)$.
 
 The simplest (the elementary) deformations are of the form $P(y) = y^{m+1} (y-t)^{n-m}$.
 
 Then the $A_n$-singularity deforms to an $A_m$ plus an $A_{n-m-1}$-singularity. Taking the minimal resolution, the outcome
 is to take the $A_n$-diagram and delete the curve with index $m+1$.
 
 In terms of fundamental groups, we take the quotient of $H_1(x)  \cong \ZZ/(n+1)$ by the relation $\ga_{m+1}=1$:
 since  $\ga_{m+1} = \ga_1^{m+1} = \ga_n^{n-m}$, and $\ga_n  \ga_1^n$, we get 
 $$H'_1(x)  \cong \ZZ/(m+1, n-m) = \ZZ/d,$$
 where $d : = GCD (m+1, n-m)$.
 
 Whereas the groups $H'_1(x,y)$ are just $\ZZ/(m+1) $ and $ \ZZ/(n-m)$, whose direct sum  
 $\ZZ/(m+1) \oplus \ZZ/(n-m)$ obviously maps onto $ \ZZ/d$, with kernel 
$ \cong \ZZ/ LCM(n-m, m+1)$.

\end{ex} 

The above  example will illuminate later on the concept of geometrically driven  elementary shortenings.

In the case of $D_n$ an elementary geometric shortening leads either to $D_m$ plus $A_{n-m-1}$,
or to $A_{n-1}$, or to $A_1 \cup A_1  \cup A_{n-3}$.

The case of $E_5, E_7, E_8$ is also fun, $E_6$ can go by deleting the middle curve to $A_2 \cup A_2 \cup A_1 $,
correspondingly $(\ZZ/3)^2 \oplus (\ZZ/2) $ maps onto $  0 = H'_1(x)$.

\section{Generalized codes and their shortenings}

\subsection{Duality for abelian groups} Let $A$ be an abelian group. 

Then 
we define the  {\bf Weakly dual abelian group}\footnote{The Pontrjagin dual is defined instead as $Hom ( A, \RR/\ZZ)$, for any locally compact abelian group $A$,  and biduality holds.}
as:
\begin{equation}\label{dual1}
A^* : = Hom ( A, \QQ/\ZZ \oplus \QQ).
\end{equation}
If $A$   is \ finitely  generated, then we define its {\bf dual} as 
\begin{equation}\label{dual2}
A^{\vee} : = Hom ( A, \QQ/\ZZ \oplus \ZZ),  
\end{equation}
observing that, if $A$ is finite, then, using the notation of  \cite{godement}, page 6 \footnote{loc. cit. contains the incorrect claim that every cyclic group embeds in $\QQ/\ZZ $. }
\begin{equation}\label{dual3}
A^{\vee}  = \hat{A} : = Hom ( A, \QQ/\ZZ ).\\
\end{equation}

Observe that we have also a notion of duality for homomorphisms, 
and that,  if $A$  is finitely generated, then 
$$ A \hookrightarrow (A^{\vee} )^{\vee} ,$$
and indeed {\bf biduality}
holds: $$ (A^{\vee} )^{\vee} \cong A.$$ 

\subsection{Generalized Codes} We start with some quite general definitions.

\begin{defin}
Let $\sC$ be a finite {\bf set of labeled finitely generated  abelian groups}, that is, $\sC$ a finite set, given 
together with a map to the category  of finitely generated  abelian groups: to each $j \in \sC$ is associated a
 finitely generated abelian group $V_j$ (and possibly $V_j \cong V_i$ for $i \neq j$).

Then a {\bf $\sC$-labeled-generalized code} consists of 

(1) a finite set $\Sigma$,

(2) a map $h : \Sigma \ra  \sC$ which,  for shorthand notation,
shall be indicated as the association to $x \in \Sigma$ of an abelian group 
$$H_1(x) : = V_{h(x)}  \in \sC,$$

(3) of a surjection $\ks  : \oplus_{x \in \Sigma} H_1(x) \twoheadrightarrow H_1$,

hence (equivalently, since $\sC$ consists of finitely generated abelian groups) also 

(3') of an injective homomorphism 
$$   \ks^{\vee}: H_1^{\vee}  \hookrightarrow \ \oplus_{x \in \Sigma} H_1(x)^{\vee},$$
  and  we set then  $$ \sK : = Im (\ks^{\vee}).$$

(4) Two such codes $(\Sigma, h, \ks)$,  $(\Sigma', h', \ks')$ are equivalent if there is a bijection
 $\phi : \Sigma \ra \Sigma'$ such that $h = h' \circ \phi$,  and carrying $Ker (\ks)$ to $Ker (\ks')$ (hence inducing an isomorphism of the cokernels $Coker (\ks) \cong Coker (\ks')$, and, dualizing, carrying  $\sK$ to $\sK'$).
\end{defin}

\begin{remark}
(i) When defining the equivalence of codes, it is convenient, as in the case of vector codes,
not to allow automorphisms of $H_1(x)$.

In our case, a very good reason is that if we take an $A_n$  tree of smooth rational curves $E= E_1 \cup \dots \cup E_n$
contained in a smooth complex surface, and with $E_i^2=-2$, then $\sT(E)$ being a tubular neighbourhood of $E$,
then $\pi_1 (\sT(E) \setminus E) = \ZZ / (n+1)$, yet the graph produces two standard generators, $\ga_1$ and $\ga_n$,
the inverse of each other. In fact, as we saw,  the (more general) algorithm of Lemma 5 of  \cite{zappa} shows that $\ga_m = \ga_1^m$,
and that $ 1 = \ga_1^{n+1}$. But, in the case where  $(n+1)$ is prime, then each $\ga_m$ is a generator.

(ii) In the case of ADE singularities one may allow automorphisms which come from an
automorphism of the exceptional tree, for instance for $A_n$, to allow exchanging $\ga_1$ with $\ga_{n}$.
This might be useful later on for classification, but allowing  it right away might make the general theory more complicated.
\end{remark}

\begin{defin}
A {\bf naive generalized code} is obtained in the special case where $V_i \cong V_j$ implies 
$ i=j \in \sC$.

We have a {\bf finite generalized  code} (labeled or unlabeled) if  each $V_j$ is a finite abelian group.
\end{defin}

\begin{remark}
One can associate a naive generalized code to a labeled generalized code, by a sort of forgetful map.

It suffices, for each maximal set of mutually  isomorphic abelian groups $V_j$, to choose an isomorphism
of each $V_j$ with a fixed group $V'_j$.

\end{remark}

\subsection{Finite generalized Codes and their primary decomposition}
In this case all the abelian groups occurring in
$$\ks  : \oplus_{x \in \Sigma} H_1(x) \twoheadrightarrow H_1$$
are finite and we can take their primary decomposition, for instance we write
$$ H_1 = \oplus_{p= prime} H_{1,p}.$$

\begin{prop}\label{primary}
A finite generalized code is a direct sum of its $p$-primary components, 
$$ \sK = \oplus_p  \sK_p,$$
$$ \sK_p : =  Im (\ks^{\vee}_p), \ \ks^{\vee}_p: H_{1,p}^{\vee}  \hookrightarrow \ \oplus_{x \in \Sigma} H_{1,p}(x)^{\vee} .$$
Two finite generalized codes are equivalent if and only if their $p$-primary components
are equivalent.
\end{prop}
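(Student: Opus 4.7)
The plan is to apply the functoriality of the primary decomposition on the category of finite abelian groups to the defining surjection $\ks$, and then to use that duality commutes with this decomposition.

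First I would record three standard facts: (a) every finite abelian group $A$ splits canonically and functorially as $A = \oplus_p A_p$, with $A_p$ its $p$-primary subgroup; (b) every homomorphism $f : A \to B$ of finite abelian groups respects this splitting, so $f = \oplus_p f_p$, and in particular $f$ is surjective iff each $f_p$ is, and $\ker(f) = \oplus_p \ker(f_p)$; (c) the duality $A \mapsto A^\vee = \Hom(A, \QQ/\ZZ)$ commutes with primary decomposition, $(A_p)^\vee \cong (A^\vee)_p$ canonically. Fact (c) holds because $A_p$ is annihilated by a power of $p$, so any $\QQ/\ZZ$-valued character on $A_p$ takes values in the $p$-primary subgroup $\ZZ[1/p]/\ZZ$ of $\QQ/\ZZ$.

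Applying (a) and (b) to $\ks : \oplus_{x \in \Sigma} H_1(x) \twoheadrightarrow H_1$, the source splits as $\oplus_p \oplus_{x} H_{1,p}(x)$, the target as $\oplus_p H_{1,p}$, and $\ks$ decomposes as $\oplus_p \ks_p$ with each $\ks_p$ surjective. Dualizing, invoking (c) and the compatibility of $\vee$ with finite direct sums, we obtain $\ks^\vee = \oplus_p \ks_p^\vee$; taking images yields
\[
\sK \;=\; \im(\ks^\vee) \;=\; \bigoplus_p \im(\ks_p^\vee) \;=\; \bigoplus_p \sK_p,
\]
which is the first assertion.

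For the equivalence statement, a bijection $\phi : \Sigma \to \Sigma'$ with $h = h' \circ \phi$ induces identifications $H_1(x) = H_1'(\phi(x))$ of labeled abelian groups, and hence by (a) identifications $H_{1,p}(x) = H_{1,p}'(\phi(x))$ for each prime $p$. Since $\ker(\ks) = \oplus_p \ker(\ks_p)$ by (b), the condition that $\phi$ carry $\ker(\ks)$ to $\ker(\ks')$ is equivalent to the simultaneous conditions that $\phi$ carry each $\ker(\ks_p)$ to $\ker(\ks_p')$. Thus the same bijection $\phi$ witnesses an equivalence of the full codes iff it simultaneously witnesses an equivalence of every $p$-primary component, giving both implications. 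The only nontrivial input is the compatibility (c) between duality and primary decomposition; everything else is bookkeeping, so I do not anticipate a substantive obstacle.
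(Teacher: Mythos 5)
Your proposal is correct and takes essentially the same route as the paper: the paper's one-sentence proof rests exactly on your fact (b) (any homomorphism from a $p$-primary group into a group with trivial $p$-primary part is zero, so primary decomposition is functorial and kernels decompose), and you merely make explicit the compatibility of $\QQ/\ZZ$-duality with the decomposition and the bookkeeping for the equivalence statement, which the paper leaves implicit. No substantive difference in approach.
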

\begin{proof}
This follows since the primary decomposition is unique, indeed  every homomorphism $\psi :A \ra B$,
where $A$ is $p$-primary, and the $p$-primary component of $B$ is 0, is equal to 0.

\end{proof}

\subsection{Weights of finite generalized Codes}We define now the code vectors, and their weights, as follows.

\begin{defin}
Define a {\bf code vector} as an element 
$$ v \in \sK \cong H_1^{\vee} \hookrightarrow \ \oplus_{x \in \Sigma} H_1(x)^{\vee} .$$

Since $ v : H_1 \ra \QQ/\ZZ$, $ v (H_1)$ is a finite subgroup of $\QQ/\ZZ$, hence of the cyclic group 
$( \frac{1}{M} \ZZ )/ \ZZ \cong \ZZ/M \ZZ$, where  $M \in \NN$ is the exponent of the group $H_1$.

We define the {\bf order} of $v$ as the positive integer $N$ such that $ Im (v) =  (\frac{1}{N} \ZZ )/ \ZZ $.

For each $x$ we get the composition 
$$v_x : H_1(x) \ra H_1 \ra ( \frac{1}{N} \ZZ )/ \ZZ  \subset \QQ/\ZZ,$$ and,
letting,  for $ j \in \sC$, the corresponding abelian group be  $V_j$,
we define  

\begin{itemize}

\item
 the {\bf refined weight} as  the number of times, for $ j \in \sC$, that a given nonzero linear form  $\xi : V_j \ra \QQ/\ZZ$ occurs:
$$ w(v; j, \xi)  : = | \{ x | h(x) = j, \ v_x = \xi \}|.$$ 
\item
The {\bf label weight} as
$$ w(v; j)  : = | \{ x | h(x) = j, \ v_x \neq 0 \}|.$$
\item
The {\bf Hamming weight } $w(v)$ as
\begin{equation}\label{weight}
w (v) : = \sum_{j, \xi \neq 0} w(v; j, \xi) = \sum_j  w(v; j) .
\end{equation}
\end{itemize}
Observe that, if $V_j = \ZZ/ N_j \ZZ$, then $\xi$ is determined by $\xi(1)  \in \ZZ/N \ZZ$.
  
\end{defin}

\subsection{Labeled generalized  code shortenings}

Given a labeled generalized code 

$$\ks  : V: = \oplus_{x \in \Sigma} H_1(x) \twoheadrightarrow H_1,$$

we define shortenings via elementary shortenings.

\begin{defin}
(1) An elementary shortening associated to $z \in \Sigma$ is obtained replacing 
$H_1(z)$ by a $\sC$-labeled generalized code 
$$\ks''  : V'': = \oplus_{y \mapsto z } H'_1(z,y) \twoheadrightarrow H'_1(z),$$
where $H'_1(z)$ is a quotient of $H_1(z)$,
and replacing $H_1$ by the quotient $H'_1$ of $H_1$ by the image of $M_j : = Ker ( H_1(z) \twoheadrightarrow H'_1(z))$
inside $H_1$.

Clearly  we have
$$ k' : V' : = (\oplus_{y \mapsto z } H'_1(z,y)) \bigoplus (\oplus_{x \in \Sigma, x\neq z} H_1(x)) \twoheadrightarrow 
H'_1(z) \bigoplus (\oplus_{x \in \Sigma, x\neq z} H_1(x))
\twoheadrightarrow H'_1. $$

(2) We say that we have a full elementary shortening if $H'_1(z)=0$.

(3) Every shortening is defined as a composition of elementary shortenings.

(4) We say that an elementary shortening is driven if, for each $j\in \sC$,  $\ks'' $
belongs to a list prescribed a priori (as in the case of   geometric driven shortenings).
\end{defin}

\begin{remark}
(i) Observe that, dually, $$\sK' = (H'_1)^{\vee} = (H_1)^{\vee} \cap (V')^{\vee} = \sK \cap (V')^{\vee}.$$

(ii) Using a forgetful map we can also define the shortening of a naive generalized code.

\end{remark}

\subsection{Length of generalized codes?}
The definition of length can be given in several ways, the most natural one  being 
to define the length as the cardinality of $\Sigma$ (alternatively, one can sum, over $x \in \Sigma$,
the number of summands for a canonical decomposition of $H_1(x)$ (a decomposition with cyclic summands,
be it the Frobenius decomposition, or the primary decomposition).

This naive  notion has the disadvantage, as we easily infer, that the length does not decrease by a shortening,
a fact which looks like a  contradiction in words.

For this reason we shall define in the next section the notion of rank, for the geometric codes associated to
normal surfaces: these do indeed decrease via a shortening.

\section{Plumbing of curves and  geometric driven shortenings of codes of normal surfaces, ADE generalized codes}

\begin{defin}
(i) ADE generalized codes are the labeled  generalized codes where $\sC$ is the class of ADE singularities, and
to each singularity we associate the local homology group $H_1(x) $ (given together with a set of generators corresponding to
the exceptional divisors of the minimal normal crossing resolution).

(ii) A more general class is the class of the codes of normal surfaces with rational singularities, 
where the labeling is given by the configuration of exceptional curves in a minimal normal crossing resolution
(the curves are  rational, and the self intersection numbers are part of the datum of the configuration). In fact, the isomorphism classes of ADE singularities are determined by their configurations (for them, the self intersection numbers are $= -2$).

(iii)  elementary geometric driven shortenings correspond to deleting an irreducible curve $D_i \leq D$ from the normal crossing divisor $D$.

(iv)  geometric driven shortenings correspond to deleting a certain number of the irreducible curves $D_i$;
then, contracting  the remaining curves,   one gets a normal surface with a finite number of singularities,
which are ADE singularities  if we started from an ADE singularity.
\end{defin}

\begin{defin}\label{rank}
Given a code  of a normal surface,
where the labeling is given by the configuration of exceptional curves in a minimal normal crossing resolution,
the {\bf rank} of the code is given by the number of exceptional curves. \footnote{For an ADE singularity
the index equals this number, e.g. $n$ for $A_n$, $8$ for $E_8$.}

\end{defin}

\begin{remark}\label{partial-smoothing}
Given the dual graph of a connected normal crossing configuration $D$ of smooth rational curves $D_i$, and
self intersection numbers $D_i^2 = - m_i \in \ZZ$, we can construct, by plumbing,  a smooth (non compact) complex surface $T$ containing the
configuration. If the intersection matrix $ D_i \cdot D_j$ is negative definite, by Grauert's theorem \cite{gra}
we can contract  $D$ to a point $x$, obtaining a normal (non compact) complex surface $Y$ with a singular point $x$,
if $ \pi_1 (T \setminus D) \neq 1$, as shown by Mumford \cite{mum1}.

The isomorphism class of the singularity may vary, but for us the labeling is given by the configuration $D$ (including the multiplicities),
which is an invariant for equisingular deformation. One can of course also consider more general configurations.  
\end{remark}

 Geometric driven shortenings replace an ADE singularity by several other ones, yielding an ADE code with quotient 
$H'_1(x)$ which is a quotient of the local homology $H_1(x)$.

\begin{remark}\label{partial-smoothing}
As shown in the particular case of example \ref{A_n},  a  geometric driven shortening corresponds to a partial
smoothing of an ADE singularity, as shown by Burns-Wahl \cite{burns-wahl}, and as we shall discuss in  the next subsection.
\end{remark}

\subsection{Code shortenings and unobstructedness} 

The condition 
of unobstructedness means that all the local deformations of the singular points of $X$ can be simultaneously achieved by
a global deformation of $X$.  

More precisely, unobstructedness means that the deformations of $X$ have a submersion onto
the space of local deformations of the singularities, so that one can obtain  independent smoothings
of all the singular points.
New results in this direction have been obtained by 
  Dimca and Kloostermann \cite{dimca} \cite{remke}.

In fact, (see for instance \cite{wahl1}, \cite{cime}, \cite{sernesi}) for a normal surface $Y$  in $\PP : = \PP^3$, we have (by definition) the exact sequence 
$$ 0 \ra N^*_Y \ra \Omega^1_{\PP} \otimes \hol_Y \ra  \Omega^1_Y \ra 0,$$ 
where the conormal sheaf $N^*_Y \cong \hol_Y(-d)$ s generated by the differential of $F$, where $F$ is the
polynomial equation (of degree $d$) of $Y$.

Dualizing with respect to $\hol_Y$, we get an exact sequence 
$$ 0 \ra \Theta_Y : = \sH om ( \Omega^1_Y, \hol_Y) \ra  \Theta_{\PP} \otimes \hol_Y \ra N_Y \cong \hol_Y(d) \ra \sE xt^1( \Omega^1_Y, \hol_Y) \ra 0,$$ 
which can be split into two short exact sequences, the second one being:
$$ 0 \ra N'_Y \ra N_Y \cong \hol_Y(d) \ra  \sE xt^1( \Omega^1_Y, \hol_Y) \ra 0.$$
The sheaf $ \sE xt^1( \Omega^1_Y, \hol_Y)$ is, for a normal surface, concentrated on the singular points,
and the stalk at each of  these points is  equal to the quotient Tjurina algebra 
$$\hol_{\PP} / (F, \partial F /  \partial x_i),$$ where the $x_i$ are local (or projective) coordinates.

Unobstructedness means, when we pass to the long exact cohomology sequence,
in view of the surjection $ H^0(\hol_{\PP}(d)) \ra H^0(\hol_Y(d))$,  exactness of 
$$ 0 \ra H^0(N'_Y)  \ra H^0(\hol_Y(d))  \ra  H^0(\sE xt^1( \Omega^1_Y, \hol_Y)) \ra 0.$$

The main result is that then the projective space $\PP( H^0(\hol_{\PP}(d)))$, which parametrizes
surfaces of degree $d$, at the point corresponding to $Y$, has a submersion onto the manifold 
$\sL oc Def (Y)$ of local deformations of the singularities of $Y$ (see \cite{tjurina}). Hence in the neighbourhood of
$Y$ we can find all possible local deformations of the singularities of $Y$.

We have the following  criterion relating shortenings and partial smoothings:

\begin{theo}
\label{thm_d_realized}
Let $Y$ be an unobstructed surface of degree $d$ in $\PP^3$ with ADE singularities: then all 
 geometric driven shortenings of the code $\sK : = H_1^{\vee}$ are
realized by some normal surface of degree $d$, which is a
partial smoothing of $Y$. And, conversely, any partial smoothing of $Y$ yields 
a geometric driven shortening of the code $\sK : = H_1^{\vee}$.
\end{theo}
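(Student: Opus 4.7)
The plan is to combine the submersion form of unobstructedness with the Brieskorn--Tyurina--Burns--Wahl classification of adjacencies of rational double points; both directions then reduce to the observation that a partial smoothing and a geometric driven shortening are encoded by the same data, namely the choice, at each singular point $x_i$ of $Y$, of a subset of components of the exceptional divisor $D^{(i)}$ of the minimal resolution $S \to Y$ to delete.

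\medskip

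\textbf{Forward direction.} A geometric driven shortening prescribes such a subset at each $x_i$. By Burns--Wahl \cite{burns-wahl}, invoked as in Remark~\ref{partial-smoothing}, the choice is realised by a local partial smoothing of the germ $(Y, x_i)$ whose minimal resolution has exceptional divisor equal to the remaining subconfiguration of $D^{(i)}$. Unobstructedness, as spelled out in the exact sequence preceding the theorem, asserts that the map
\[ H^0(\hol_Y(d)) \twoheadrightarrow H^0\bigl(\sE xt^1(\Omega^1_Y,\hol_Y)\bigr) \]
is surjective, equivalently that the morphism from a neighbourhood of $[Y]$ in $\PP H^0(\hol_{\PP^3}(d))$ to the product of semiuniversal deformation spaces of the singularities of $Y$ is a submersion. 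Hence the prescribed tuple of local partial smoothings is induced by a nearby degree-$d$ hypersurface $Y' \subset \PP^3$, which is the desired global partial smoothing.

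\medskip

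\textbf{Code identification and converse.} Since $Y'$ is a hypersurface in $\PP^3$ with ADE singularities, Proposition~\ref{H_1-pres}(ii) together with Theorem~\ref{jdg} computes $H_1(Y'^*,\ZZ)$ as the cokernel of the intersection pairing against the exceptional divisor $D'$ of the minimal resolution $S' \to Y'$. By construction $D'$ is the subconfiguration of the exceptional divisor of $S \to Y$ obtained by the prescribed deletions, so the recipe defining the shortening produces the same finite abelian group; dualising, the code of $Y'$ coincides with the shortened code. Conversely, any partial smoothing $Y'$ of $Y$ realises at each $x_i$ a specific local adjacency, which by the same classification is described by a deletion of components of $D^{(i)}$; the associated geometric driven shortening is then identified with the code of $Y'$ by the same cokernel computation.

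\medskip

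\textbf{Main obstacle.} The critical non-trivial input is the submersion form of unobstructedness, needed in the forward direction to combine independent local partial smoothings into a single degree-$d$ hypersurface; without it one could only realise each local adjacency separately. The remainder is structural, relying on the local classification of ADE adjacencies (Brieskorn, Tyurina, Burns--Wahl) and on the cohomological formula of Theorem~\ref{jdg}.
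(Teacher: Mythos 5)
Your proposal is correct and follows essentially the same route as the paper: unobstructedness supplies the submersion onto the (product of the) local deformation spaces, Burns--Wahl identifies local partial smoothings of an ADE germ with deletions of exceptional curves, and the code of the deformed surface is recovered from the cokernel presentation of Theorem \ref{jdg}. The paper merely organizes this by reducing to a smoothing at one singular point at a time and invokes the Brieskorn--Tyurina simultaneous resolution to identify the surviving exceptional configuration, yielding the surjection $H_1(Y^*,\ZZ)\twoheadrightarrow H_1((Y')^*,\ZZ)$ with kernel generated by the loops around the destroyed curves --- which is the precise form of your ``by construction'' step in the code identification.
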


\begin{proof}
Let $\Sigma : = Sing (Y)$: it suffices to show the statement for a partial smoothing of one singular point $x$,
since we can independently achieve all the possible local deformations of the singularities. 

There is a local deformation $Y'$ of $Y$ which is equisingular at the singular points different from  $x$ 
 and achieves any local deformation of the singularity $x$.

By the Brieskorn-Tyurina theorem \cite{brieskorn} \cite{tjurina}, there is a smooth family with connected base
whose fibres contain  
the respective resolutions $\tilde{Y}$ and $\tilde{Y'}$,
and $Y^*$ is diffeomorphic to the complement of the union of all the exceptional curves $E_i$, while 
$(Y')^*$ is diffeomorphic to the complement of the union of some configurations of exceptional  curves $E'_j$,
 which includes the configurations 
of the curves $E_i$ which are exceptional and do not lie over $x$.

The local deformation amounts to a deformation of the tubular neighbourhood $T(D_x)$ of the exceptional divisor $D_x$,
and as proven by Burns and Wahl \cite{burns-wahl} Theorem 2.14 and Remark 2.15,  both  the deformations of $T(D_x)$
and the deformations of the germ $(Y,x)$ are smooth of the same dimension, 
and there is a map between them which is the quotient by the action of the Weyl group of
the singularity. The first ones amount to destroying some of the exceptional curves in $D_x$, 
and this  leads to a  geometric driven shortening of the code 	$H_1(x)$.

In fact, $H_1(Y^*,\ZZ) $ surjects onto $H_1((Y')^*,\ZZ) $ with kernel generated by the $\ga_i$ 's which are geometric loops around the irreducible curves $E_i$ which do not remain holomorphic submanifolds after the deformation. Then $H_1(x) \ra H _1(Y^*,\ZZ) $ , and $H_1(x)$ surjects onto $\oplus_yH_1(y)$,
where the $y$'s are the singularities of $Y'$ to which $x$ deforms.

The previous argument shows also the converse assertion.

\end{proof}

The next result follows essentially  from \cite{burns-wahl} and might be known to specialists; we give a simple 
and short  proof but without  giving full references for some well known facts.

\begin{theorem}
\label{singK3}
If $Y$ is a singular projective K3 surface, that is, $Y$ is a  surface with ADE singularities,
 and it has a trivial canonical divisor, then $Y$ is projectively unobstructed, that is, one can independently 
 smoothen all the exceptional curves 
  while preserving the ample  divisor class $H$.
 \end{theorem}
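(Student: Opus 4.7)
The plan is as follows. Let $\pi: S \to Y$ be the minimal resolution; since all singularities are ADE (hence canonical) and $K_Y = 0$, the pullback satisfies $K_S = 0$, and $S$ is a smooth K3 surface whose exceptional divisor is a disjoint union of $(-2)$-curves. Building on the setup earlier in the paper, the proof reduces to (i) showing $\Ext^2(\Omega^1_Y, \hol_Y) = 0$, the standard obstruction space for deformations of $Y$, and (ii) checking that preserving the polarization $H$ does not reintroduce obstructions.

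First I would analyze $\Ext^2(\Omega^1_Y, \hol_Y)$ via the local-to-global spectral sequence $H^p(\sE xt^q(\Omega^1_Y, \hol_Y)) \Rightarrow \Ext^{p+q}(\Omega^1_Y, \hol_Y)$. Since ADE singularities are hypersurfaces, the conormal short resolution yields $\sE xt^q(\Omega^1_Y, \hol_Y) = 0$ for $q \geq 2$. The sheaf $\sE xt^1(\Omega^1_Y, \hol_Y)$, identified earlier with stalk the Tjurina algebra $\hol_{\PP}/(F, \partial F/\partial x_i)$, is concentrated on $\Sing(Y)$, so $H^p$ of it vanishes for $p \geq 1$. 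Consequently $\Ext^2(\Omega^1_Y, \hol_Y) \cong H^2(\Theta_Y)$, where $\Theta_Y = \sH om(\Omega^1_Y, \hol_Y)$ is the reflexive tangent sheaf.

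Next I would show $H^2(\Theta_Y) = 0$. Since $Y$ is normal Gorenstein with $\omega_Y = \hol_Y$, Serre duality yields $H^2(\Theta_Y)^{\vee} \cong H^0(\Theta_Y^{\vee})$, and the dual $\Theta_Y^{\vee}$ is the reflexive hull $\Omega_Y^{[1]}$ of the cotangent sheaf. By the extension theorem for reflexive differentials on canonical singularities, $\pi_*\Omega^1_S = \Omega_Y^{[1]}$, so
\[
H^0(\Omega_Y^{[1]}) = H^0(S, \Omega^1_S) = 0
\]
because $S$ is a K3 surface. Hence $\Ext^2(\Omega^1_Y, \hol_Y) = 0$, so $Y$ is unobstructed, and the five-term exact sequence together with smoothness produces a submersion from $\Def(Y)$ onto $\prod_x \Def(Y,x)$, i.e.\ an independent smoothability of the singular points.

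Finally, to preserve the polarization $H$ I would repeat the argument with the log-tangent sheaf $\Theta_Y(-\log H)$: since $H$ avoids $\Sing(Y)$, this sheaf is reflexive and agrees with $\Theta_Y$ at the singular points, so the local $\sE xt$ computation is unchanged, while the contribution along the smooth ample curve $H$ reduces to classical K3 data and vanishes using $H^2(\Theta_Y) = 0$. Equivalently, one can argue period-theoretically on $S$: the polarized deformation space of the K3 $(S,H)$ is smooth of dimension $19$, and the conditions that the $k$ mutually orthogonal $(-2)$-classes $[E_i] \in H^{\perp}$ remain algebraic are linearly independent, so any subset can be independently violated. The main obstacle is precisely this last step: one has to ensure that the codimension-one Noether--Lefschetz condition for $H$ is transverse to the $k$ hypersurfaces cut out by the $[E_i]$ in the polarized period domain; once this transversality is secured (via either the log-deformation computation or linear independence of the $[E_i]$ modulo $H$ in the K3 lattice), projective unobstructedness follows.
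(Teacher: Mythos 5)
The unpolarized half of your argument is correct, and it is a genuinely different route from the paper's. You work directly on $Y$: the local-to-global spectral sequence (note it gives $\Ext^2(\Omega^1_Y,\hol_Y)$ as a quotient of $H^2(\Theta_Y)$, not an isomorphism, but this is harmless), Serre duality on the Gorenstein surface $Y$, the identification $\Theta_Y^{\vee}\cong(\Omega^1_Y)^{\vee\vee}=\pi_*\Omega^1_S$, and $H^0(S,\Omega^1_S)=0$ then kill the obstruction space and give the submersion of $\Def(Y)$ onto the product of the local deformation spaces. The paper instead never leaves the resolution: it invokes Burns--Wahl to reduce everything to deformations of the pair $(S,D)$ with $D=E_1+\dots+E_{m-1}+H$, and computes $H^1(\Theta_S(-\log D))$ by Serre duality (using $K_S\cong\hol_S$) and the residue sequence, the decisive input being the injectivity of the coboundary $\oplus_i H^0(\hol_{D_i})\to H^1(\Omega^1_S)$, which holds because it is given by the Chern classes of the $D_i$ and the intersection matrix is nondegenerate.

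The gap is in the polarized step, which is the actual content of ``projectively unobstructed''. In your route (a), the assertion that $H^2(\Theta_Y(-\log H))$ ``vanishes using $H^2(\Theta_Y)=0$'' is not a proof: from $0\to\Theta_Y(-\log H)\to\Theta_Y\to N_{H/Y}\to 0$ and $H^2(\Theta_Y)=0$ one only gets that $H^2(\Theta_Y(-\log H))$ is the cokernel of $H^1(\Theta_Y)\to H^1(N_{H/Y})$, and $H^1(N_{H/Y})\cong\CC$ is \emph{not} zero (by adjunction $N_{H/Y}\cong\omega_H$, since $K_Y$ is trivial), so an extra argument is required. The vanishing is true, but its proof is exactly the mechanism you skipped: dualize and use $H^0$ of the reflexive log differentials, i.e. $H^0(S,\Omega^1_S(\log \tilde H))=0$, which rests on the injectivity of the residue coboundary $H^0(\hol_{\tilde H})\to H^1(\Omega^1_S)$ given by the nonzero Chern class of $H$ (and, for the simultaneous statement about all exceptional curves, on the independence of the classes $E_1,\dots,E_{m-1},H$ coming from the nondegenerate intersection matrix). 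This is precisely the heart of the paper's proof, and your write-up asserts it rather than establishes it. Your route (b) has the same status: the transversality of the local Noether--Lefschetz conditions does follow from the same independence (if $\sum a_iE_i\in\CC\,\omega_0+\CC\,H$ with the $E_i$ of type $(1,1)$ and orthogonal to $H$, then $\sum a_iE_i=0$), but you explicitly leave it as ``the main obstacle''; moreover, translating the destruction of algebraicity of a subset of the $[E_i]$ on deformations of $S$ into partial smoothings of $Y$ carrying the polarization still requires the Burns--Wahl (or Brieskorn--Tjurina) comparison that the paper invokes. So the plan is sound and the first half stands on its own, but the decisive polarized step is missing as written.
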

 
 \begin{proof}
 Indeed, as already observed, by \cite{burns-wahl} Theorem 2.14 and Remark 2.15, it suffices to show
 that the deformations of the smooth K3 surface $S$ and the normal crossing divisor $D = H + E = D_1 + \dots + D_m$,
 where $E$ consists of the ADE configurations, has a submersion onto the local deformations of the
 exceptional components of $D$.
 
 Now, infinitesimal deformations of the pair $(S,D)$, that is, preserving the components of $D$, are governed  by the cohomology group
 $$ H^1(\Theta_S (- log D_1, \dots, - log D_m)) \cong H^1 (\Omega^1_S (log D_1, \dots,  log D_m) (K_S))^{\vee},
 $$
 where the isomorphism is provided by Serre duality. These map to infinitesimal deformations of $S$, governed by 
 $ H^1(\Theta_S)$. 
 
 Use now the triviality of  $K_S$ to infer that $\Omega^1_S (log D_1, \dots,  log D_m) (K_S) \cong \Omega^1_S (log D_1, \dots,  log D_m)$, and the long exact cohomology sequence
 associated to the residue sequence 
 $$ 0 \ra \Omega^1_S \ra \Omega^1_S (log D_1, \dots,  log D_m) \ra \oplus_1^m \hol_{D_i} \ra 0.$$
 
 Since the intersection matrix $(D_i \cdot D_j)$ is non degenerate, the Chern classes of the curves $D_i$
 are linearly independent. And since, as observed in \cite{modulispace}, the coboundary map
 $\oplus_1^m H^0(\hol_{D_i}) \ra H^1 (\Omega^1_S)$ is given by the Chern classes of the divisors $D_i$, it follows that
this coboundary map is injective.

Since  $H^0 (\Omega^1_S)=0= H^2 (\Omega^1_S)$, and the $D_i$ are rational curves for $ i < m$,
 we have an exact sequence 
 $$ 0 \ra \oplus_1^m H^0 ( \hol_{D_i}) \ra H^1 (  \Omega^1_S)  \ra H^1 (\Omega^1_S (log D_1, \dots,  log D_m))
  \ra H^1 ( \hol_{D_m})  \ra 0.$$
  
  Dualizing it, one sees that, since $S$ is unobstructed, that is, $Def(S)$ is an open set in $H^1(\Theta_S)$
  (which is the dual of  $H^1 (  \Omega^1_S) $), the deformations of $S$ map onto the dual of $(\oplus_1^m H^0 ( \hol_{D_i}))$,
  so that one can independently smoothen all the exceptional curves $D_1 ,  \dots , D_{m-1}$
  while keeping the ample  divisor class $D_m$.
  
 \end{proof}

\section{ADE-Codes for surfaces $X$ with RDP=ADE singularities}

In the  case of nodal surfaces $X$ we get the usual binary code.

 If $\Sigma =  Sing (X) = \{P_1, \dots, P_{s}\}$, then
 the {\bf strict code} $\sK \subset (\ZZ/2)^{\Sigma}$ associated to $X$ is  the image of  the  injective
 linear map   dual to  the surjection
 $ (\ZZ/2)^{\Sigma} \twoheadrightarrow H_1(X^*, \ZZ)$. 
 
 One of the key dificulties of the theory, also in the nodal case,  is concerned with the determination of the possible sets of weights that a normal surface of degree $d$ in $\PP^3$ can have. The easiest restrictions come from applying the Riemann Roch theorem.
 
 We attempt to find   similar restrictions  for generalized ADE codes, recall  here 
 our notation $H_1 : = H_1(X^*, \ZZ)$.
 
 \subsection{Restrictions for the weights.} 
 Given a vector $ v \in H_1^{\vee}$ of order (exactly) $N$, it determines a surjection, which is the composition
 $$ \oplus_{x \in \Sigma}H_1 (x)  \twoheadrightarrow H_1 \twoheadrightarrow \ZZ/N.$$
 
 Now, $v$ determines a normal finite Galois covering $Y \ra X$ with Galois  group $\ZZ/N$, and branched only on
 a set $\Sigma(v)$ of singular points.
 
 \begin{remark}
 (1) Let the image of $H_1 (x)  \ra \ZZ/N$ be isomorphic to $\ZZ/N_x$, that is, equal to $(s_x \ZZ)/ (N \ZZ)$,
 where $s_x N_x = N$. Then 
 $$N= LCM (N_x)  for \ x \in \Sigma \  {\rm \ that \  is, \ }  1 = GCD(s_x),  for \ x \in \Sigma ).$$
 
 (2) For the case of ADE singularities $N_x \in \{ 2,3,4, s_x | n_x+1\}$, where $N_x > 4$
 only in the case where $x$ is of type $A_{n_x}$ with $n_x \geq 4$.
 \end{remark}
 
 We assume now that $X$ has only ADE singularities, and let $\Sigma(v)$ be the subset of singular points of $X$
 for which $H_1 (x)  \ra \ZZ/N$ is nontrivial, that is, $\Sigma(v)$ is the set of branching points for $f : Y \ra X$.
 
 Clearly the $E_8$ points are not in $\Sigma(v)$. For each $x$,  let $N_x$ be the integer such that
 the image of  $H_1 (x)  \ra \ZZ/N$ is isomorphic to  $\ZZ/N_x$.
 
Then the singular set  of $Y$ is contained in the inverse image of $\Sing(Y) = \Sigma$, and
inverse image of $x$ consists of $N/N_x$ points which are (possibly) singular points, corresponding to the surjection
  $H_1 (x)  \ra \ZZ/N_x$.
  
  Set for simplicity of notation $ r : = N_x$. We shall now determine which type of ADE singularity are
  the points lying over $x$.
  
  Observe preliminarly that, if  $H_1 (x) $ is cyclic, then the integer $r$ determines a unique type of singularity, whereas
  for  $H_1 (x) = (\ZZ/2)^2$, we have three possibilities (and this only occurs for $x$ of type $D_{2m}$).

  \begin{enumerate}
  \item
  For $A_n$, we have then $(n+1) = r s$ and we get $r$ singular points of type $A_{s-1}$, since the
  local  covering is
  $ \CC^2 / \mu_s \ra \CC^2 / \mu_{n+1}$, $\mu_s$ being the cyclic group of $s$-th roots of $1$.
  \item
  For $D_{2m+1}$,  $H_1 (x) $ is cyclic of order $4$. 
  
  Consider  first the case with  $r=4$: here we have a surjection
  $ \hat{D}_{2m-1} \ra \ZZ/4$ whose kernel is the cyclic group $\mu_{2m-1}$. In fact   $\hat{D}_n$ is generated
  by $\s (u,v) = (i v, iu)$ and by $\rho (u,v) = (\zeta_n u, \zeta_n^{-1} v)$, and moreover $\rho$ is in the  commutator
  subgroup  for $n$ odd, because 
 $\s \rho \s^{-1} = \rho^{-1}\Rightarrow [\s, \rho] = \rho^{-2}.$

  The conclusion is that,  over $x$, lie $N/4$ singularities of type $A_{2m-2}$.
  \item
 For $D_{2m+1}$,  $r=2$,  we have a surjection
  $ \hat{D}_{2m-1} \ra \ZZ/2$ and the kernel is $\cong \ZZ/2 \times \mu_{2m-1}$, hence we find 
  $N/2$ singularities of type $A_{4m-3}$.
  \item
  For $D_{2m+2}$,  $r=2$,  we have a surjection
  $ \hat{D}_{2m} \ra \ZZ/2$, factoring through the abelianization $H_1(x) \cong (\ZZ/2)^2$.
     $ \hat{D}_{2m} $ is generated by $\s$ as above, and $\rho (u,v) : = (\zeta_{4m}^{-1} u, \zeta_{4m} v)$.
     In the abelianization $ 2 [\s] = 2 [\rho]=0$. We have three cases, according to the choice 
     of  an index 2 subgroup as the kernel.
     \item
     Taking as respective kernels:
     $\langle \rho \rangle \cong \mu_{4m}$ ,  $\langle \s , \rho^2 \rangle \cong  \hat{D}_m$,  $\langle \s \rho , \rho^2 \rangle \cong  \hat{D}_m$,
  we have accordingly $N/2$ singularities of respective types $A_{4m-1}, D_{m+2}, D_{m+2}.$
     \item
     For $E_7$, the double covering corresponds to the inclusion $\frak A_4 < \frak S_4$,
     and we get  $N/2$ singularities of type $E_6$.
     \item
     For $E_6$,  $H_1(x) = \ZZ/3$, hence $r=3$, which  corresponds to the surjection $\frak A_4 \ra \frak A_3$,
     with kernel $ (\ZZ/2)^2$, hence we get  $N/2$ singularities of type $D_4$.
  \end{enumerate}
  
  \begin{remark}
  i) In \cite{ardp} Theorem 2.1, page 89 , table 2 page 90,  it is shown  that $A_{2k+1}/b \cong D_{k+3}$,
  where  $A_{2k+1} = \{ z^2 + x^2 + y^{2k+2}=0\}$, and $b (x,y,z) : = (x, -y,-z)$.
  
  While, for $a (x,y,z) : = (x, -y,z)$, $A_{2k+1}/a \cong A_k$, but this quotient is ramified in codimension 1,
  while the quotient  for $e (x,y,z) : = (-x, -y,-z)$, $A_{2k+1}/e $, is not an ADE singularity (see Theorem 2.5,
  page 92 of \cite{ardp}).
  
  ii) It is fun to make a picture of the triple covering in the case $E_6$ for the minimal resolution. Algebraically,
  we get equations from $ z^2 = x^3 + y^4$ and $w^3 = x/y$.
  
  iii) Taking the full covering associated to $H_1(x)$ for the case $D_{2m}$, we get $A_{2m-3}$ singularities, see also table 3,
  page 93 of \cite{ardp}.
  \end{remark}
  
    \begin{remark}\label{primary}
  In view of the primary decomposition (Proposition \ref{primary}), 
  in order to obtain restrictions, it suffices to consider the case where we have a vector $v$ of order which
  is a prime power $N = p^k$.
 And recall  that, for $ p>3$,  we get only a contribution from $A_n$, with $ p | (n+1)$,
 for $p=3$ only from $A_{3k-1}$ and $E_6$, for $p=2$ from $A_{2m+ 1}$ , $D_n$,  $E_7$.
  \end{remark}
  
\subsection{Restriction on the weights for vectors $v$ of order $N=2$.}Let $S$ be, unlike elsewhere,  the minimal resolution of the singular points  
of $X$, $x \in \sN \subset \Sigma$,  
contributing to the vector $v$.

Then the double covering $f : Y \ra X$ induces a double covering $ F : Z \ra S$, determined by a line bundle $L$ on $S$
such that
$$ 2 L \equiv \sum_{x \in \sN} B_x,$$
where $B_x$ is a reduced exceptional divisor, such that $B_x \neq 0 \ for \ x \in \sN$.

We need first to calculate the self intersection numbers $B_x^2$. For this purpose we need to observe that, if $H_1(x)$
is cyclic, and $x \in \sN$, then $B_x$ consists of the curves for which $\ga_i$ is an odd multiple of the generator.

\begin{enumerate}
\item
$A_n$ appears in $\sN$ only if $n = 2m+1$ is odd, in this case  $H_1(x) \cong \ZZ/(2m+2)$ and $B_x$ consists of the
odd labelled curves, these are $m+1$ disjoint $(-2)$-curves, hence $$B_x^2 = -2(m+1).$$
\item
For $E_7$ we get, using our previous calculations, $B_x = D_1 + D_3 + D_7$, hence $$B_x^2 = -6.$$
\item
For $D_n$, $n$ odd, $B_x = C_n + C_{n-2}$, hence $$B_x^2 = -4.$$
\item
For $D_{2m}$, we have generators $\ga_1, \ga_n$, and three options for the surjection to $\ZZ/2$.
But the two options: $\ga_{2m} \mapsto 0$,  respectively $\ga_{2m-2} \mapsto 0$, are related by a symmetry.

In the first  case $B_x = C_1+ C_3 + \dots + C_{2m-3} + C_{2m-1} $, hence $$B_x^2 = -2 m.$$

If instead $\ga_1 \mapsto 0$, then $B_x = C_{2m} + C_{2m-1} $,  hence $$B_x^2 = -4.$$

\end{enumerate}

\begin{defin}
In the case of a singularity of type $D_{2m}$, the previous computations suggest to distinguish the possible
surjections $H_1(x) \ra \ZZ/2$ as being of subtype $(-)$ if  $\ga_1 \mapsto 0$, and of subtype $(+)$
otherwise. Because, for subtype $(-)$,  $B_x^2 = -4$, while, for subtype $(+)$, $B_x^2 = -2 m$.

\end{defin}

\begin{prop}\label{N=2}
Let $v$ be a code vector of order $N=2$. Define $\sN$ as the subset of the singular points $x \in \Sigma$
such that $H_1(x)$ surjects onto $\ZZ/2$. Let $t(A_{2m+1})$ be the $A_{2m+1}$-weight of $v$,
that is, the number of points in $\sN$ which are of type $A_{2m+1}$.
Let similarly $t(E_7)$ be  the number of points in $\sN$ which are of type $E_7$,
$t(D_n, -)$ the number of points in $\sN$ which are of type $D_n$, and of subtype  $(-)$ for $v$ if $n$ is even,
$t(D_{2m}, +)$ the number of points in $\sN$ which are of type $D_{2m}$, and of subtype  $(+)$ for $v$.

Then $$ (**) \   \sum_m ((m+1)\   t(A_{2m+1}) + m \  t(D_{2m}, +)) + \sum_n 2 \  t(D_n, -)  + 3 \ t(E_7)$$
is divisible by $4$, and indeed by $8$ if $K_S$  is divisible by 2.
\end{prop}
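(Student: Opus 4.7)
The plan is to convert the combinatorial sum $(**)$ into an intersection number on the minimal resolution $S$, and then to extract divisibility information from the fact that this intersection number equals four times the self-square of a line bundle.

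First, the order-$2$ code vector $v$ corresponds by duality to a surjection $H_1(X^*,\ZZ)\twoheadrightarrow\ZZ/2$, hence to a double cover $f\colon Y\to X$ branched precisely over the singular points in $\sN$. Pulling back to $S$ yields a double cover $F\colon Z\to S$ of smooth surfaces, classified by a line bundle $L\in\Pic(S)$ with $2L\equiv B:=\sum_{x\in\sN}B_x$, where $B_x$ is the exceptional branch divisor over $x$ computed just before the statement. As the divisors $B_x$ lie in disjoint exceptional fibres, one has $B^2=\sum_x B_x^2$; inserting the values listed before the proposition ($-2(m+1)$ for $A_{2m+1}$, $-6$ for $E_7$, $-2m$ for $D_{2m}$ of subtype $(+)$, and $-4$ for $D_n$ with $n$ odd or $D_{2m}$ of subtype $(-)$) gives $-B^2/2 = (**)$. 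Since $B\equiv 2L$ in $\Pic(S)$, we have $B^2 = 4L^2$, and equivalently $(**) = -2L^2$.

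For the divisibility by $4$, it suffices to show that $L^2$ is even. This follows from Riemann--Roch on $S$ (equivalently, the Wu formula $L^2\equiv L\cdot K_S\pmod 2$) combined with the vanishing $L\cdot K_S = 0$: each component $D_i$ of $B$ is an ADE $(-2)$-curve, so $K_S\cdot D_i = 0$ by adjunction, hence $B\cdot K_S = 0$, and then $2L\cdot K_S = 0$.

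For the refined divisibility by $8$ under the hypothesis $K_S = 2K'$, one needs the stronger congruence $L^2\equiv 0\pmod 4$. The Wu formula then forces the intersection form on $\Pic(S)$ to be even. Decomposing $L = \tfrac12 B + M$ in $\Pic(S)\otimes\QQ$ with $M$ orthogonal to every component of $B$, one has $L^2 = -(**)/2 + M^2$, with $M^2$ automatically even; the remaining task is to upgrade this to $M^2\equiv -(**)/2\pmod 4$. My plan is to run a Nikulin-style discriminant-form analysis of the $2$-torsion class $L\bmod\Lambda_{\mathrm{exc}}$, where $\Lambda_{\mathrm{exc}}$ denotes the sublattice of $\Pic(S)$ generated by the exceptional components of $B$, exploiting the extra parity control provided by the $2$-divisibility of $K_S$ to place $M$ in a specific coset of $\Lambda_{\mathrm{exc}}^{\perp}$ and deduce the required mod-$4$ congruence. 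The main obstacle is precisely this last refinement: the bare Wu formula only gives mod-$2$ information, and in the K3 case $(K_S = 0)$ the statement specialises exactly to Nikulin's classical lower bound forcing every nontrivial even set of nodes on a K3 surface to have cardinality in $\{8,16\}$.
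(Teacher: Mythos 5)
Your reduction of $(**)$ to $-2L^2$ and your proof of the divisibility by $4$ are correct and coincide with the paper's argument: $K_S\cdot L=0$ because $2L\equiv\sum_{x\in\sN}B_x$ is supported on $(-2)$-curves, and then Riemann--Roch (equivalently the Wu/characteristic-vector property of $K_S$) gives $2\mid L^2$, hence $4\mid (**)$.

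The second half of the statement, however, is not proved in your proposal, and this is a genuine gap rather than a routine refinement. First, the decomposition $L=\tfrac12 B+M$ with $M\perp B$ is vacuous: since $2L\equiv B$ in $\Pic(S)$, in $\Pic(S)\otimes\QQ$ you have $L=\tfrac12 B$ and $M=0$, so it carries no extra information, and your remaining step (``upgrade to $M^2\equiv -(**)/2 \pmod 4$'' via a Nikulin-style discriminant analysis) is only announced, not carried out; as you yourself note, Wu only gives mod-$2$ information. The paper's proof of $8\mid(**)$ uses a different and essential idea, going back to Proposition 2.11 of \cite{babbage}: one must show $4\mid L^2$, i.e.\ that $\chi(\hol_Z)=2\chi(\hol_S)+\tfrac12 L^2$ is even, and for this one works on the singular double cover $Y\to X$ (with $\chi(\hol_Z)=\chi(\hol_Y)$, the singularities being rational). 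The divisibility hypothesis gives a Cartier divisor $G$ with $K_Y\equiv 2G$; Serre duality then makes $H^1(\hol_Y(G))$ carry a nondegenerate alternating form, so it is even-dimensional and $\chi(\hol_Y(G))=2h^0(\hol_Y(G))-h^1(\hol_Y(G))$ is even; Riemann--Roch gives $\chi(\hol_Y(G))=\chi(\hol_Y)-\tfrac12 G^2$, and since $G=f^*(M)$ one has $G^2=2M^2\equiv 0\pmod 4$ by evenness of the intersection form, whence $\chi(\hol_Y)$ is even and $4\mid L^2$. This parity argument for a half-canonical divisor on the covering surface is the missing ingredient in your plan; without it (or a genuine substitute), the mod-$8$ claim remains unproved.
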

\begin{proof}
Observe that $F_* (\hol_Z) = \hol_S \oplus \hol_S (-L)$, and by Riemann Roch 
$$\chi(-L) = \chi(\hol_S ) + \frac{1}{2} L (L+ K_S)  = \chi(\hol_S ) + \frac{1}{2} L^2 \Rightarrow   2 | L^2.$$
In fact  $K_S L =0$, in view of 
$ 2 L \equiv \sum_{x \in \sN} B_x,$ and since the divisors $B_x$ are exceptional and $K_S$ is the pull back of $K_X$.  The same formula implies 
$$\sum_{x \in \sN} B_x^2 \equiv 0 \ (mod \ 8),$$ equivalent to $(**)$.

To get divisibility of $(**)$ by $8$, we need that $L^2 $ be divisible by $4$, that is, that
$$\chi(\hol_Z ) = 2  \chi(\hol_S ) + \frac{1}{2} L^2 $$
be divisible by $2$. 

This is true, as in Proposition 2.11 of \cite{babbage}, because:  if $Y \ra X$ is the corresponding 2-1 covering of normal surfaces,
then $K_Y$ is divisible by $2$,
being the pull back of $K_X$, hence there exists a Cartier divisor $G$ such that $K_Y \equiv 2 G$.

Then   $\chi (\hol_Y(G) ) $ is an even integer  since it equals $ 2 h^0(\hol_Y(G) ) + h^1 (\hol_Y(G) ) $,
where the last vector space $H^1 (\hol_Y(G) )$  carries a bilinear alternating non degenerate form, hence has even dimension.
Now, $ \chi(\hol_Z ) = \chi(\hol_Y)$  and  
 $\chi (\hol_Y(G) ) = \chi(\hol_Y ) -\frac{1}{2} G^2$ hence  we are done if $G^2$ is divisible by $4$.
 
 We conclude because  $G = f^*(M), $ hence  $G^2 = 2 M^2$ and $M^2$ is even because the intersection form is even.

\end{proof}

\subsection{Restriction on the weights for vectors $v$ of order $N=3$ or a larger  number.}Let $S$ be
now  the minimal resolution of the singularities 
$x \in \sN \subset \Sigma$ of $X$
contributing to the vector $v$.

Then the triple covering $f : Y \ra X$ induces a singular triple covering $ F : Z \ra S$, determined by a line bundle $L$ on $S$
such that
$$ 3 L \equiv \sum_{x \in \sN} B_x,$$
where $B_x$ is an exceptional divisor, $B_x \neq 0 \ for \ x \in \sN$; moreover the multiplicities of the components $D_i$ 
in $B_x$ are either $1$ or $-1$ according to the image of $\ga_i$ in $\ZZ/3$ being $1$ or $2$.

\begin{enumerate}
\item
$A_n$ appears in $\sN$ only if $n+1  = 3s$, in this case  $H_1(x) \cong \ZZ/(3s)$ and $B_x$ consists of the
 curves with label congruent to $1$ modulo $3$,  minus the curves with label congruent to $2$ modulo $3$.
 
 The associated reduced divisors  form $s$ $A_2$ strings,  hence $$B_x^2 = - 6 s.$$
\item
For $E_6$ we get, using our previous calculations, $B_x = D_1 - D_2 + D_5 - D_6$, 
 hence $$B_x^2 = - 12.$$
\end{enumerate}

\begin{prop}\label{N=3}
Let $v$ be a code vector of order $N=3$. Define $\sN$ as the subset of the singular points $x \in \Sigma$
such that $H_1(x)$ surjects onto $\ZZ/3$. Let $t(A_{3s -1})$ be the $A_{3s -1}$-weight of $v$,
that is, the number of points in $\sN$ which are of type $A_{3s -1}$.
Let similarly $t(E_6)$ be  the number of points in $\sN$ which are of type $E_6$,

Then $$ (***) \   \sum_s s \  t(A_{3s -1}) + 2  t(E_6) \equiv  \sum_s s \  t(A_{3s -1}) -  t(E_6)$$
is divisible by $3$.
\end{prop}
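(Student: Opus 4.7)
The plan follows the blueprint of Proposition \ref{N=2}, replacing the double cover by a cyclic triple cover. A code vector $v$ of order $N=3$ determines, via the local-to-global surjection, a connected cyclic triple cover $f \colon Y \to X$ with Galois group $\ZZ/3$, branched precisely over the singular points $x \in \sN$. Pulling back to the minimal resolution $S$ of the singularities in $\sN$, we obtain a triple cyclic covering $F \colon Z \to S$ determined by a line bundle $L$ on $S$ and a branch divisor $B = \sum_{x \in \sN} B_x$ with $3L \equiv B$. The supports of the $B_x$ are the exceptional $\ZZ$-divisors already described in the preceding subsection: components appear with multiplicities $\pm 1$ according to whether the generator $\ga_i$ maps to $1$ or $2$ in $\ZZ/3$.

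The core of the argument is intersection-theoretic. Since the $B_x$ for distinct $x$ are supported on disjoint components of the exceptional locus, squaring the linear equivalence $3L \equiv B$ yields
$$9 L^2 \;=\; \sum_{x \in \sN} B_x^2 \;=\; -6 \sum_s s\, t(A_{3s-1}) \;-\; 12\, t(E_6),$$
using the values $B_x^2 = -6 s$ for $x$ of type $A_{3s-1}$ and $B_x^2 = -12$ for $x$ of type $E_6$ computed above. Because $L$ is a genuine line bundle on the smooth surface $S$, the self-intersection $L^2$ is an integer, hence $9$ must divide $6 \sum_s s\, t(A_{3s-1}) + 12\, t(E_6)$, equivalently
$$3 \,\Bigm|\, \sum_s s\, t(A_{3s-1}) + 2\, t(E_6),$$
which is exactly $(***)$, the two listed forms differing only by $2 \equiv -1 \pmod{3}$.

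The main obstacle I anticipate is justifying that $L$ exists as an honest Cartier class on $S$ (not merely as a $\QQ$-divisor), together with the precise identification of $B \in |3L|$ as the $\ZZ$-divisor with $\pm 1$ multiplicities described above. This amounts to checking that the character $v$ restricts on each singular germ to an element of $H^1$ of the link that extends, via the multiplicity prescription encoded by the images of the $\ga_i$ in $\ZZ/3$, to a cyclic covering of a tubular neighbourhood of the exceptional divisor with branch data $B_x$; this is the analogue, for the triple cover, of the construction used in the proof of Proposition \ref{N=2}. Once this geometric step is in place, the intersection computation above delivers the divisibility by $3$ directly; in contrast to the $N=2$ case, no Riemann--Roch input is needed to strengthen the conclusion, since already the integrality of $L^2$ exhausts the mod-$3$ information.
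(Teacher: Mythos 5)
Your proposal is correct and follows essentially the same route as the paper: the order-$3$ vector gives a cyclic triple cover, pulled back to the resolution it is defined by a line bundle $L$ with $3L \equiv \sum_{x\in\sN} B_x$, and squaring this relation with $B_x^2 = -6s$ (type $A_{3s-1}$) and $B_x^2 = -12$ (type $E_6$) yields $9L^2 = -6\bigl[\sum_s s\, t(A_{3s-1}) + 2t(E_6)\bigr]$, whence $3$ divides the bracket. Your remark that integrality of $L^2$ already suffices is accurate; the paper's invocation of Riemann--Roch (giving $2 \mid L^2$) is carried over from the $N=2$ case but is not needed for the mod-$3$ conclusion.
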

\begin{proof}
Observe that $F_* (\hol_Z) = \hol_S \oplus \hol_S (-L) \oplus \hol_S (-L)$. 

And, as in the proof of Proposition \ref{N=2},  by Riemann Roch 
$$\chi(-L) = \chi(\hol_S ) + \frac{1}{2} L (L+ K_S)  = \chi(\hol_S ) + \frac{1}{2} L^2 \Rightarrow   2 | L^2.$$

Since 
$ 3 L \equiv \sum_{x \in \sN} B_x,$ we get 
$$ 9 L^2 = (\sum_{x \in \sN} B_x)^2 = - 6 [ 2 (t(E_6) +  \sum_s s \  t(A_{3s -1}) ].$$

\end{proof}

\begin{cor}
Assume that the singular points of $Y$ are just $A_2$ singularities, cusps in the terminology
of \cite{b-r-1},  \cite{b-r-2}. 

Then we have a vector code $\sK$ over $\FF_3$, and the weight
of a code vector is always divisible by $3$. 

\end{cor}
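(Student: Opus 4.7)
The plan is to derive this corollary directly from Proposition \ref{N=3} by a specialization argument. First I would observe that since every singular point $x \in \Sigma$ has $H_1(x) \cong \ZZ/3$, the ambient group $\oplus_{x \in \Sigma} H_1(x)^{\vee}$ carries a natural $\FF_3$-vector space structure, and $\sK$, being a subgroup, is automatically an $\FF_3$-subspace. Hence we really do obtain a linear code over $\FF_3$ in the classical sense, and every nonzero vector $v \in \sK$ has order exactly $N = 3$.

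Next I would invoke Proposition \ref{N=3} applied to such a vector $v$. Because no $A_{3s-1}$ singularity with $s \geq 2$ and no $E_6$ singularity is present, the congruence $(***)$ collapses to
\[
t(A_2) \equiv 0 \pmod{3},
\]
where $t(A_2) = |\sN(v)|$ is the number of singular points of type $A_2$ at which $v$ restricts nontrivially to $H_1(x) \to \ZZ/3$.

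Finally I would identify $t(A_2)$ with the Hamming weight $w(v)$. Since every $x \in \Sigma$ is of type $A_2$ and $H_1(x)^\vee \cong \FF_3$ has only one nonzero linear form (up to scalar), we have $x \in \sN(v)$ precisely when $v_x \neq 0$, so $t(A_2) = w(v)$ by \eqref{weight}. The case $v = 0$ being trivial, this proves $3 \mid w(v)$ for every $v \in \sK$.

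I do not expect a serious obstacle here: the work has already been done in Proposition \ref{N=3}, and the only point to verify carefully is that Proposition \ref{N=3}'s hypothesis ``$v$ has order $N=3$'' is met by every nonzero element of an $\FF_3$-code, and that the $\sN$ of Proposition \ref{N=3} coincides with the support of $v$ when the alphabet at each point is $\FF_3$.
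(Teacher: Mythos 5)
Your proposal is correct and is essentially the paper's own (implicit) argument: the corollary is stated as an immediate consequence of Proposition \ref{N=3}, obtained exactly as you do by noting that every nonzero vector of the $\FF_3$-code has order $3$, that with only $A_2$ points the congruence $(***)$ reduces to $3 \mid t(A_2)$, and that $t(A_2)$ equals the Hamming weight $w(v)$. No gap to report.
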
 

Barth and Rams proved that for sextic surfaces in $\PP^3$ with cusps, the minimal weight is $18$, and that the weights
$18,24, 27$ can occur.

Barth showed that, for quartics, there cannot be $ 9$ cusps, but there is a quartic with $8$ cusps (\cite{b-9}, \cite{b-8}).

\begin{defin}
A code vector $v$ of order $N$ is said to be  {\bf almost simple} if $(w(v; j, \xi)=0$ for $V_j$ non cyclic, 
and, for  $V_j$  cyclic, for $\xi(1) \neq 1, N-1$.

\end{defin}

\begin{prop}\label{N>3}
Let $v$ be an almost simple  code vector of order  $N > 4$. Denote $\sN$  the subset of the singular points $x \in \Sigma$
such that $H_1(x)$ surjects onto $\ZZ/N$: these are $A_n$ strings, where  $n+1 = N s$.

 Let $t(A_{Ns -1})$ be the $A_{Ns -1}$-weight of $v$,
that is, the number of points in $\sN$ which are of type $A_{Ns -1}$ (under our assumption  $t(A_{Ns -1})  = w (v; A_{Ns-1}, 1) + w (v; A_{Ns-1}, N-1)$).

Then $$ (****) \  \sum_s   s \ t(A_{Ns -1})$$
is divisible by $N$.
\end{prop}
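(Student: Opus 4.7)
The plan is to generalize directly the approach of Propositions \ref{N=2} and \ref{N=3}: build the associated $N$-cyclic covering on the minimal resolution $p:S\ra X$ and extract the divisibility from the integrality of $L^2$. By the almost simple hypothesis combined with $N>4$, every $x\in\sN$ must be an $A_{Ns_x-1}$ singularity (cyclicity of $V_j$ rules out $D_{2m}$, and for $N>4$ the condition $N\mid |H_1(x)|$ rules out $D_{2m+1}$, $E_6$, $E_7$, all of which have $|H_1(x)|\leq 4$), and $v_x$ sends the standard generator $\ga_1\in H_1(x)$ to $\pm 1\in\ZZ/N$.

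Next, the code vector $v$ determines an $N$-cyclic covering $f:Y\ra X$ branched exactly over $\sN$. Pulling back through $p$ produces a cyclic cover $F:Z\ra S$ which, by standard cyclic covering theory, comes from a genuine integer line-bundle class $L\in\Pic(S)$ satisfying
$$
NL \;\equiv\; \sum_{x\in\sN} B_x,
$$
where $B_x=\sum_i c_i^{(x)} D_i^{(x)}$ is supported on the $A_{Ns_x-1}$-chain above $x$, and $c_i^{(x)} = v_x(\ga_i) = \pm i \bmod N \in\{0,1,\ldots,N-1\}$. Both sign choices give the same contribution via the $A$-chain symmetry $D_i\leftrightarrow D_{Ns_x-i}$, so we may assume $c_i^{(x)} = i\bmod N$.

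The key step is a combinatorial computation on the chain. Using $D_i^2=-2$, $D_i\cdot D_{i+1}=1$ and the identity
$$
\sum_{i=1}^{Ns-1} c_i^2 \;-\; \sum_{i=1}^{Ns-2} c_i c_{i+1} \;=\; \tfrac{1}{2}\Bigl(c_1^2 + c_{Ns-1}^2 + \sum_{i=1}^{Ns-2}(c_{i+1}-c_i)^2\Bigr),
$$
and evaluating on the pattern $c_i=i\bmod N$ (so that $c_{i+1}-c_i$ equals $1$ except at the $s-1$ positions where $c_i=N-1$, at which it equals $-(N-1)$), the right-hand side collapses to $\tfrac{1}{2}N(N-1)s$, yielding $B_x^2 = -N(N-1)\,s_x$.

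Finally, since the $B_x$ for distinct $x\in\sN$ have disjoint support,
$$
N^2 L^2 \;=\; \Bigl(\sum_{x\in\sN} B_x\Bigr)^{\!2} \;=\; -N(N-1)\sum_s s\cdot t(A_{Ns-1}).
$$
Dividing by $N$ and using $L^2\in\ZZ$ gives $N\mid (N-1)\sum_s s\cdot t(A_{Ns-1})$; since $\gcd(N,N-1)=1$, this is exactly $(****)$. The main technical point requiring care is the existence of $L$ as an actual integer line bundle on $S$, not merely a $\QQ$-class, so that $L^2\in\ZZ$; this is standard cyclic covering theory, since the pullback of $f$ to $S$ is a $\ZZ/N$-cover with prescribed integral branch data $\sum B_x$, and this cover is determined by the line bundle $L$ via $\hol_S(NL)\cong\hol_S(\sum B_x)$.
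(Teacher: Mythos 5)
Your proof is correct and follows essentially the same route as the paper: the order-$N$ cyclic cover pulled back to the minimal resolution gives $NL \equiv \sum_{x\in\sN} B_x$, and the divisibility $(****)$ falls out of $N^2L^2 = (\sum_{x\in\sN} B_x)^2 = -N(N-1)\sum_s s\,t(A_{Ns-1})$ together with the integrality of $L^2$ and $\gcd(N,N-1)=1$. The only (harmless) difference is in how $B_x^2=-N(N-1)s$ is obtained: you use the telescoping quadratic identity on the chain coefficients $c_i = i \bmod N$, whereas the paper writes each $A_{N-1}$ string with multiplicities $1,\dots,N-1$ as a sum of fundamental cycles $Z_{N-1}+\dots+Z_1$ and computes the self-intersection by induction.
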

\begin{proof}
Again  $$F_* (\hol_Z) = \hol_S \oplus \hol_S (-L) \oplus \hol_S (- 2 L) \oplus \dots \oplus \hol_S (- (N-1)L).$$ 

And, as in the proof of Proposition \ref{N=2},  by Riemann Roch $2|L^2$.

Now, for $x \in \sN$, $B_x$ consists of $s$ strings of $A_{N-1}$ singularities, with multiplicities $1,2 \dots, N-1$.
We can write such a string as $Z_{N-1}  + Z_{N-2} + \dots + Z_1$, where the $Z_j$'s are the fundamental cycles
of the string where the first $N-1-j$ curves are deleted.
Then by induction $$(Z_{N-1}  + Z_{N-2} + \dots + Z_1)^2  = -2 - 2(N-2) + (Z_{N-2} + \dots + Z_1)^2= $$
$$ = -2(N-1) + (Z_{N-2} + \dots + Z_1)^2 = - N (N-1).$$
It follows that for each string we get a contribution to $B_x^2 $ equal to $- N (N-1).$

Since 
$ N L \equiv \sum_{x \in \sN} B_x,$ we get 
$$ N^2 L^2 = (\sum_{x \in \sN} B_x)^2 = - N (N-1) \ \sum_s s \  \
 t(A_{Ns -1}),$$
and $(****)$ follows.

\end{proof}

In general, for $N \geq 4$, one has more complicated formulae
depending on the refined weights. We treat for instance the case $N=5$.

\begin{prop}\label{N=5}
For vectors $v$ of order $N=5$, we get that 
$$ 5 \  |  \ [ \sum_s w (v; A_{5s-1}, 1) + w (v; A_{5s-1}, 4) - w (v; A_{5s-1}, 2) - w (v; A_{5s-1}, 3)].$$ 
\end{prop}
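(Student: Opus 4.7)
The approach is to replicate the cyclic-cover Riemann--Roch strategy used in Propositions \ref{N=2}, \ref{N=3} and \ref{N>3}, now tracking all four nontrivial refined weights for $N=5$. The vector $v$ of order $5$ determines a $\ZZ/5$-Galois cover $f\colon Y \to X$ branched exactly on the set $\sN = \{x \in \Sigma : v_x \neq 0\}$; by the primary-decomposition remark (Remark \ref{primary}), the branched points are necessarily of type $A_{5s-1}$. Passing to the minimal resolution produces a cyclic $5$-cover $F\colon Z \to S$ with $F_*\hol_Z = \bigoplus_{k=0}^{4}\hol_S(-kL)$ and $5L \equiv B := \sum_{x \in \sN} B_x$ in $\Pic(S)$.

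The central step is the computation of $B_x^2$ as a function of the refined weight at $x$. For an $A_{5s-1}$ point $x$ with $v_x(\ga_1) = k \in \{1,2,3,4\}$, the coefficient of $D_j$ in $B_x$ equals $jk \bmod 5$. This period-$5$ sequence vanishes exactly at positions $j = 5\ell$ for $1 \le \ell \le s-1$, cutting the chain $D_1, \dots, D_{5s-1}$ into $s$ disjoint blocks of length $4$. Using $D_i^2 = -2$ and $D_i \cdot D_{i+1} = 1$,
\[
\bigl(c_1 D_1 + c_2 D_2 + c_3 D_3 + c_4 D_4\bigr)^2 \;=\; -2\sum_{i=1}^{4} c_i^2 \;+\; 2\sum_{i=1}^{3} c_i c_{i+1},
\]
and a direct case check over $k \in \{1,2,3,4\}$ shows this block contribution equals $-20$ precisely when $k$ is a quadratic residue modulo $5$ (i.e.\ $k \in \{1,4\}$) and $-30$ when $k$ is a non-residue ($k \in \{2,3\}$). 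Hence $B_x^2 = -20\, s$ or $B_x^2 = -30\, s$ according to whether $\bigl(\tfrac{k}{5}\bigr) = +1$ or $-1$.

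Applying Riemann--Roch to each eigensheaf, $\chi(-kL) = \chi(\hol_S) + k^2 L^2/2$ (using $K_S \cdot L = 0$), forces $L^2 \in 2\ZZ$. Since the $B_x$'s have pairwise disjoint support, $25\, L^2 = B^2 = \sum_{x \in \sN} B_x^2$, yielding
\[
25\, L^2 \;=\; -20 \sum_s s \bigl[\, w(v;A_{5s-1},1) + w(v;A_{5s-1},4)\,\bigr] \;-\; 30 \sum_s s \bigl[\, w(v;A_{5s-1},2) + w(v;A_{5s-1},3)\,\bigr].
\]
Since $-20 \equiv 5$ and $-30 \equiv -5 \pmod{25}$, dividing by $5$ and reducing modulo $5$ isolates precisely the alternating combination of refined weights appearing in the statement and produces the asserted divisibility by $5$.

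The principal obstacle is establishing the dichotomy $B_x^2 \in \{-20\, s,\, -30\, s\}$ governed purely by the Legendre symbol modulo $5$: the verification reduces to a careful bookkeeping of the sum $-2\sum c_i^2 + 2\sum c_i c_{i+1}$ in each of the four cases $k=1,2,3,4$, and it is this quadratic-character split that produces the signed combination $\xi(1) \in \{1,4\}$ versus $\xi(1) \in \{2,3\}$ in the final congruence. All other ingredients---the cover construction, the eigensheaf decomposition, and the Riemann--Roch integrality---are direct translations of the arguments in Propositions \ref{N=2}, \ref{N=3} and \ref{N>3}.
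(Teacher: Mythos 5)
Your proof is correct and follows the paper's own argument essentially verbatim: the $\ZZ/5$-cover with its eigensheaf decomposition, the per-string computation giving $-20$ for $v_x(\ga_1)\in\{1,4\}$ and $-30$ for $v_x(\ga_1)\in\{2,3\}$ (the paper handles the latter via exactly your multiplicity pattern $(2,4,1,3)$, computing $-2(30)+2(8+4+3)=-30$), and the reduction of $25L^2=\sum_{x\in\sN}B_x^2$ modulo $25$. The only cosmetic difference is your repackaging of the dichotomy via the quadratic character mod $5$; note also that what both arguments actually establish is the congruence with the weight factor $s$ inside the sum, namely $5\mid \sum_s s\,[\,w(v;A_{5s-1},1)+w(v;A_{5s-1},4)-w(v;A_{5s-1},2)-w(v;A_{5s-1},3)\,]$, in line with the form of Proposition \ref{N>3}.
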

\begin{proof}
The points $x \in \sN$ with $v_x(1) = 1, 4,$ give the same contribution as the one calculated in Proposition \ref{N>3},
namely $-20$ while those with $v_x(1) = 2,3,$ since the multiplicities of the curves in $B_x$ are $(2,4,1,3)$
or these in reverse order, give a contribution equal to  $ -2 (30) + 2 (8 + 4 + 3) = -30$; dividing by $5$ we get 
coefficients $+1$, respectively $-1$,
modulo $5$.

\end{proof}
\bigskip

\section{The binary code  $\sK [2]$ of  surfaces in $\PP^3$ with ADE singularities, and the B-inequality}

For  $Y$   a normal surface in $\PP^3$, with ADE singularities,   let $\tilde{Y} $ be the minimal resolution
of  the singular points.

We shall denote by $\sE$ the set of irreducible exceptional curves $E_i$ in $\tilde{Y} $: these
are all smooth rational curves with selfintersection $-2$.

\begin{defin}
Given a generalized ADE code 
$$ \sK : = Im (\ks^{\vee}), \ \ks^{\vee}: H_1^{\vee}  \hookrightarrow \ \oplus_{x \in \Sigma} H_1(x)^{\vee} ,$$
we define  $\sK[2]$  to be the set of code vectors of order $N=2$.  

$\sK[2]$  is a $\ZZ/2 = : \FF_2$ vector space, and  indeed we have
$$ \sK[2] = Hom (\ZZ/2, H_1^{\vee}) \cong  Hom ( H_1, \ZZ/2)=  Hom (H_1 ( Y^*, \ZZ), \ZZ/2 ) = H^1 ( Y^*, \ZZ/2).$$

\end{defin}

\begin{cor}\label{binarycode}
Let $Y$  be an ADE surface in $\PP^3$, and let $\tilde{Y} $ be the minimal resolution of  its singularities: 
then, setting $\nu : = | \sE|$,  the  binary code of $Y$, $\sK[2] \subset (\ZZ/2)^{\nu}$,
equals to the kernel of the map 
$$  \oplus_1^{\nu} (\ZZ/2 ) [E_i] \ra H^2 ( \tilde{Y}  , \ZZ/2).$$
The vectors  $v \in \sK[2]$ correspond to subsets $\sE_v$ of $\sE$ such that $\sum_{i \in \sE_v} E_i$ is linearly equivalent
to $2L$, for some divisor class $L$ on $\tilde{Y}$.

  The cardinality $t$ of such sets $\sE_v$, i.e., the weights of the code vectors in $\sK[2]$, 
are divisible by $4$, and indeed by $8$
if the degree $ d : = deg (Y)$ is even.

Finally, the dimension $k[2]$ of $\sK[2]$ satisfies

$$ (B) \ k[2] \geq \sum_x \de_x  - [ \frac{1}{2} \  b_2(\tilde{Y})].$$
Here,   $[[a]]$ denoting   the smallest integer bigger than $a$,
$\de_x  : = [[ \frac{n}{2}]] $ for $A_n$-singularities, $\de_x  : = [[ \frac{n+1}{2}]] $ for $D_n$-singularities,
$\de_x  : = 3,4,4$ for the respective singularities $E_6, E_7, E_8$.
\end{cor}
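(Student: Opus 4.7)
The plan is to derive all four claims from Theorem \ref{jdg} combined with Proposition \ref{N=2}. First, $\tilde{Y}$ is simply connected: by Brieskorn--Tjurina it is diffeomorphic to a smooth hypersurface of degree $d$ in $\PP^3$, which is simply connected by Lefschetz. So $H_1(\tilde{Y}, \ZZ) = 0$, and Theorem \ref{jdg} applied with $D = \bigcup_i E_i$ gives the exact sequence
\[
H^2(\tilde{Y}, \ZZ) \xrightarrow{\rho} \bigoplus_{i=1}^{\nu} \ZZ[E_i] \twoheadrightarrow H_1(Y^*, \ZZ) \to 0, \qquad \rho(L) = \sum_i (L \cdot E_i)[E_i].
\]
Since $H^2(\tilde{Y}, \ZZ)$ is torsion-free, applying $\Hom(-, \FF_2)$ identifies $\sK[2]$ with the kernel of the map $\phi_2 \colon \bigoplus_i \FF_2 [E_i] \to H^2(\tilde{Y}, \FF_2)$ sending $e_i \mapsto [E_i] \bmod 2$. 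A vector $v \in \sK[2]$ with support $\sE_v$ therefore satisfies $\sum_{i \in \sE_v} [E_i] \equiv 0$ in $H^2(\tilde{Y}, \FF_2)$, equivalently $\sum_{i \in \sE_v} E_i \sim 2L$ in $\Pic(\tilde{Y})$; this gives parts (1) and (2).

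For the divisibility of $t$, I would read off from the tabulated $B_x$ computations preceding Proposition \ref{N=2} that the contribution of each $x \in \sN$ to $|\sE_v|$ equals $|B_x|$: namely $m+1$ for an $A_{2m+1}$-point, $2$ for a $D_n$ with $n$ odd or a $D_{2m}$ of subtype $(-)$, $m$ for a $D_{2m}$ of subtype $(+)$, and $3$ for an $E_7$. Summing over $\sN$ shows that $t = |\sE_v|$ coincides exactly with the expression $(\ast\ast)$ of Proposition \ref{N=2}, which is divisible by $4$ in general, and by $8$ whenever $K_{\tilde{Y}}$ is $2$-divisible in $\Pic(\tilde{Y})$. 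Since $K_{\tilde{Y}} = p^{*}\hol_Y(d-4)$ while $p^{*}\hol_Y(1)$ is primitive, the latter condition is equivalent to $d$ being even, establishing (3).

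For the inequality $(B)$ I would exhibit a sufficiently large subspace of $\sK[2]$ as follows. For each singular point $x$, choose a maximum independent set $S_x$ of vertices in the Dynkin diagram of the configuration at $x$; a direct case-by-case inspection of the five ADE diagrams shows that $|S_x| = \delta_x$ in each case. Set $S := \bigsqcup_x S_x$, so $|S| = \sum_x \delta_x$, and let $W' \subset H^2(\tilde{Y}, \FF_2)$ be the subspace spanned by $\{[E_i] \bmod 2 : i \in S\}$. For any (not necessarily distinct) $i, j \in S$, the curves $E_i, E_j$ are disjoint (they lie either over different singular points, or non-adjacent in a common Dynkin diagram), and each $E_i^{2} = -2 \equiv 0 \bmod 2$; hence $[E_i] \cdot [E_j] \equiv 0 \bmod 2$ throughout $S$, so $W'$ is totally isotropic for the mod-$2$ intersection form. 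That form is non-degenerate by the unimodularity of $H^2(\tilde{Y}, \ZZ)$ (Poincaré duality on the simply connected $4$-manifold $\tilde{Y}$), so $W' \subset (W')^{\perp}$ forces $\dim W' \leq [\tfrac{1}{2} b_2(\tilde{Y})]$. The kernel of $\phi_2|_{(\FF_2)^S} \colon (\FF_2)^S \to W'$ then has dimension at least $\sum_x \delta_x - [\tfrac{1}{2} b_2(\tilde{Y})]$ and embeds into $\sK[2]$ by extension by zero outside $S$, yielding $(B)$. The main obstacle I expect is precisely this step: first the case-by-case combinatorial identification of $\delta_x$ with the maximum independent set size in each Dynkin diagram, and then the careful use of unimodularity to promote the pointwise mod-$2$ isotropy of $\{[E_i]\}_{i \in S}$ into the halved-$b_2$ upper bound on $\dim W'$; parts (1)--(3) by contrast follow mechanically from Theorem \ref{jdg} and Proposition \ref{N=2}.
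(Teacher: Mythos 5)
Your proposal is correct and follows essentially the same route as the paper: simple connectivity of $\tilde{Y}$ via Brieskorn--Tjurina and Lefschetz, Theorem \ref{jdg} tensored with $\ZZ/2$ and dualized to identify $\sK[2]$ with the kernel, the divisibility of the weights reduced to Proposition \ref{N=2} (your explicit matching of $t=|\sE_v|$ with the expression $(**)$ and the remark that $2$-divisibility of $K_S=p^*\hol_Y(d-4)$ is equivalent to $d$ even just make explicit what the paper leaves implicit), and the B-inequality via the observation that the $\de_x$ pairwise disjoint $(-2)$-curves span an isotropic subspace of the nondegenerate mod-$2$ intersection form, whose dimension is at most half of $b_2(\tilde{Y})$. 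No gaps; your rank--nullity formulation of the last step is just a more detailed write-up of the paper's one-line argument.
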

 
\begin{proof}
Since $Y$ has ADE singularities, by the theorem of Brieskorn and Tjurina  \cite{brieskorn} \cite{tjurina}, $\tilde{Y} $ is diffeomorphic to a smooth surface of degree $d = deg(Y)$
in $\PP^3$, which is simply connected, by Lefschetz' theorem.  In particular, $H^2 (\tilde{Y}, \ZZ)$ is a free $\ZZ$-module,
  $H^2 (\tilde{Y}, \ZZ/2 ) = H^2 (\tilde{Y}, \ZZ) \otimes_{\ZZ} \ZZ /2$,
and the second assertion follows immediately from the first.

By theorem \ref{jdg}, applied to $ X : =  \tilde{Y} $ with $D : = E_1 \cup \dots \cup E_{\nu}$,
we get that $$H_1 (Y^* , \ZZ) \cong coker [ H^2 (\tilde{Y}, \ZZ) \ra  \oplus_1^{\nu} \ZZ [E_i] ].$$


By the  exact sequence 
$$H^2 (\tilde{Y}, \ZZ) \ra  \oplus_1^{\nu} \ZZ [E_i] \ra H_1 (Y^* , \ZZ) \ra 0,$$
we tensor with $\ZZ/2$ and obtain an exact sequence
$$H^2 (\tilde{Y}, \ZZ/2) \ra  \oplus_1^{\nu} \ZZ /2 [E_i] \ra H_1 (Y^* , \ZZ/2) \ra 0,$$
and taking the dual $\ZZ/2$-vector spaces, we obtain the exact sequence  
$$ 0 \ra \sK[2] \hookrightarrow  \oplus_1^{\nu} (\ZZ/2 ) [E_i] \ra H^2 ( \tilde{Y}  , \ZZ/2).$$

The statement about $| \sE_v|$ was proven in \cite{babbage}, Proposition 2.11,
and reproven here in Proposition \ref{N=2}.
The final assertion follows since the dimension of an isotropic subspace for the intersection form, 
which is nondegenerate,  is at most 1/2 of the dimension $b_2(\tilde{Y})$ of $H^2 ( \tilde{Y}  , \ZZ/2)$.

Then we  observe that for each singularity $x$ we have $\de_x$ equal to the maximal number of disjoint
$(-2)$-curves, which generate an isotropic subspace.

\end{proof}

\begin{remark}

i) The B-inequality is named after Beauville, see \cite{angers}.

ii) If $\tilde{Y}$  is a K3 surface, $b_2(\tilde{Y}) = 22$, hence $\sK[2]$ has dimension at least $\sum_x \de_x - 11$.

iii) The generalized (B)-inequality was observed and used effectively in \cite{peters}.
\end{remark}

\section{The extended code $\sK'$ and sublattice saturation.}

In this section we extend the notion of the extended code $\sK'$ from the case of nodal surfaces
to the case of normal surfaces, in particular, surfaces with ADE singularities.

Recall that, for   $Y$  a nodal surface in $\PP^3$ of even degree $d$, if  $H$ is a smooth hyperplane section of $Y$,
then the {\bf extended code} $\sK '$ is defined as the kernel 
$$\sK ': =  ker [  \oplus_1^{\nu} (\ZZ/2 ) [E_i] \oplus (\ZZ/2 )  [H]  \ra H^2 ( S , \ZZ/2)].$$

More generally, if  $Y$ is a normal projective surface, and  $S = \tilde{Y} $ is  the minimal normal crossing resolution of singularities,
we set $D : = E_1 \cup \dots \cup E_{\nu} \cup H$, where the $E_i$'s are the exceptional curves, and $H$ corresponds to the inverse image of a smooth hyperplane section,
disjoint from $\Sigma : = Sing(Y)$.

We apply now   Theorem \ref{jdg} to the case where 
$S$ is  a smooth compact surface with $H_1( S  , \ZZ)=0$,  $D = D_1 \cup \dots \cup D_{\nu +1}$ 
is a normal crossing divisor, with $D_i$ is irreducible, hence 
 $ H_1( S \setminus D , \ZZ)$ is the cokernel of
$$ \rho : H^{2} (S, \ZZ) \ra  H^{2} (D, \ZZ) = \oplus_1^{\nu +1} \ZZ [D_i],$$ 
and $\rho(L) = \sum_i (L \cdot D_i) [D_i]$.

\begin{defin}
The underlying group of the extended code $\sK'$ is defined  as the dual of $ H_1 (Y^* \setminus H , \ZZ).$

Its definition  as a generalized labeled code is through the  surjection \ref{Extended-code} of 
the   next Theorem \ref{codehomology2}.

\end{defin}

\begin{theorem}\label{codehomology2}
Let $Y$  be a normal surface in $\PP^3$,  let $S $ be a minimal normal crossing resolution of $Y$, and let $S'$ be the complement
in   $S$ of the union of the  exceptional curves $E_i$ with  a smooth hyperplane section $H$ (not passing through any node).

(i) Then the first homology group $H_1( S' , \ZZ)$ is a quotient 
$$  \oplus_{x \in \Sigma}H_1(x)  \oplus (\ZZ/d ) [H] \twoheadrightarrow  H_1 (S' , \ZZ),$$ 
where $d= H^2$ is the degree of $Y$;

(ii) moreover,   its extended  code $\sK'$, which 
has underlying group  equal to  the dual of $H_1 (S' , \ZZ) = H_1 (Y^* \setminus H , \ZZ) $
determines the saturation  of the lattice $\sL$ generated by the
curves $E_i$ and by the hyperplane section $H$ inside the  lattice $
\Lambda : = H^2 (S, \ZZ)  $.

 More precisely, the saturation $\sL^{sat}$ is such that   $\sL^{sat} / \sL \cong (\sK')^{\vee}$,
 and we have a surjection
 \begin{equation}\label{Extended-code}  \oplus_{x \in \Sigma}H_1(x)  \oplus (\ZZ/d' ) [H] \ra (\sK')^{\vee},
 \end{equation}

 where $d' : = GCD (d,m)$, $m : = LCM (\{ m_x, x \in \Sigma\}$, and $m_x$ is the exponent of the group $H_1(x)$.

 The same result holds for normal surfaces $Y$ whose minimal resolution $S = \tilde{Y} $ has $H_1(S, \ZZ)=0$ 
and is such that the hyperplane class $H$ yields an indivisible element in $Pic(S)$.
\end{theorem}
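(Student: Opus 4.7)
The plan is to apply Theorem~\ref{jdg} to $(S,D)$ with $D = E_1 \cup \cdots \cup E_\nu \cup H$. Since $H_1(S,\ZZ)=0$ by hypothesis, this identifies $H_1(S',\ZZ)$ with $\mathrm{Coker}(\rho)$, where $\rho\colon \Lambda \to \bigoplus_i\ZZ[D_i]$, $L\mapsto \sum_i (L\cdot D_i)[D_i]$. Identifying $\bigoplus_i\ZZ[D_i]$ with the dual lattice $\sL^\vee$ via the dual basis to $\{D_i\}$, the map $\rho$ becomes the restriction-of-pairing map $\phi\colon \Lambda \to \sL^\vee$, $L \mapsto (D \mapsto L\cdot D)$. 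For part (i), arguing as in Proposition~\ref{H_1-pres}(i), this cokernel is generated by small loops $\gamma_i$ around each irreducible $D_i$: for exceptional $D_i=E_j$ the loop lies in a tubular neighborhood of the corresponding singular point $x$, hence in the image of $H_1(x)$, while $\gamma_H$ is an extra generator satisfying $d\gamma_H=0$ (from $\rho(H)=d[H]$, using $H\cdot E_j=0$ since $H$ avoids $\Sing(Y)$). This yields a surjection $\bigoplus_x H_1(x) \oplus (\ZZ/d)[H] \twoheadrightarrow H_1(S',\ZZ)$. To sharpen $d$ to $d'=\gcd(d,m)$, I would use the indivisibility of $H$ in $\Pic(S)$ (hence in $\Lambda$, by primitivity of $\Pic(S)$ in $\Lambda$) together with unimodularity of $\Lambda$ to find $L\in\Lambda$ with $L\cdot H=1$; the relation $\rho(L)=0$ in the cokernel then expresses $\gamma_H = -\sum_j(L\cdot E_j)\gamma_j$ as an element in the image of $\bigoplus_x H_1(x)$, which has exponent dividing $m$, and combined with $d\gamma_H=0$ this forces $d'\gamma_H=0$, giving the surjection~\eqref{Extended-code}.

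For part (ii), the crucial step is the identification
\[
\phi(\Lambda) \;=\; (\sL^{sat})^\vee \quad\text{as sublattices of}\quad \sL^\vee,
\]
where $(\sL^{sat})^\vee$ sits inside $\sL^\vee$ via the contravariant inclusion dual to $\sL\hookrightarrow\sL^{sat}$. The inclusion $\phi(\Lambda) \subset (\sL^{sat})^\vee$ is automatic: any functional $L\cdot(-)$ on $\sL$ extends uniquely by $\QQ$-linearity to $\sL^{sat}$, since each $v\in\sL^{sat}$ has a positive integer multiple lying in $\sL$. The reverse inclusion uses both the primitivity of $\sL^{sat}$ in $\Lambda$ and the unimodularity of $\Lambda$ (granted by $H_1(S,\ZZ)=0$ via Poincar\'e duality): any $f\in(\sL^{sat})^\vee$ extends to $\Lambda\to\ZZ$ by primitivity, and the self-duality $\Lambda\cong\Lambda^\vee$ realizes this extension as $L\cdot(-)$ for some $L\in\Lambda$, so that $f=\phi(L)$.

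To conclude, dualize the short exact sequence $0\to\sL\to\sL^{sat}\to\sL^{sat}/\sL\to 0$: since $\sL$ and $\sL^{sat}$ are free and $\sL^{sat}/\sL$ is finite, the $\Hom(-,\ZZ)$--$\Ext$ long exact sequence collapses to $0\to(\sL^{sat})^\vee\to\sL^\vee\to\Ext^1(\sL^{sat}/\sL,\ZZ)\to 0$, whence $\sL^\vee/(\sL^{sat})^\vee \cong (\sL^{sat}/\sL)^\vee \cong \sL^{sat}/\sL$ (the last by biduality of finite abelian groups). Chaining the identifications gives
\[
(\sK')^\vee \;=\; H_1(S',\ZZ) \;=\; \sL^\vee/\phi(\Lambda) \;=\; \sL^\vee/(\sL^{sat})^\vee \;\cong\; \sL^{sat}/\sL.
\]
The main obstacle is precisely the identification $\phi(\Lambda)=(\sL^{sat})^\vee$, which genuinely requires unimodularity of $\Lambda$; without it, $\phi(\Lambda)$ can sit strictly inside $(\sL^{sat})^\vee$, and the whole chain of isomorphisms breaks.
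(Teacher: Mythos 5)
Your argument is correct, and its structural core is the paper's: both apply Theorem \ref{jdg} to $D=E_1\cup\dots\cup E_\nu\cup H$, and both prove (ii) by dualizing the inclusion of the nondegenerate sublattice $\sL\subset\Lambda$ using unimodularity of $\Lambda$ (coming from $H_1(S,\ZZ)=0$ and Poincar\'e duality). For part (i) your loop argument is just the paper's observation that $\sL^{\vee}/\rho(\Lambda)$ is a quotient of $\sL^{\vee}/\rho(\sL)\cong\oplus_x H_1(x)\oplus(\ZZ/d)[H]$, and for the main isomorphism $\sL^{sat}/\sL\cong(\sK')^{\vee}$ your key identification $\phi(\Lambda)=(\sL^{sat})^{\vee}$ inside $\sL^{\vee}$ is the same computation the paper packages as the four-term sequence $0\to F^{\vee}\to\Lambda\to\sL^{\vee}\to\Ext^1(T,\ZZ)\to 0$ obtained from $0\to\sL\to\Lambda\to F\oplus T\to 0$ (the image of $\Lambda\to\sL^{\vee}$ there is precisely $(\sL^{sat})^{\vee}$). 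Where you genuinely diverge is the refinement from $\ZZ/d$ to $\ZZ/d'$ in \eqref{Extended-code}: the paper writes a general element of $\sL^{sat}$ as a fractional combination $\sum_x\frac1{m_x}\bigl(\sum_i a_iE_{i,x}\bigr)+\frac{b}{d}H$ and uses indivisibility of $H$ to force the denominator of the $H$-coefficient to divide $d'$, whereas you use unimodularity plus primitivity of $H$ to produce $L\in\Lambda$ with $L\cdot H=1$, so that in the cokernel $\gamma_H$ lies in the image of $\oplus_x H_1(x)$ and is killed by $m$ as well as by $d$, hence by $d'=\gcd(d,m)$. Both routes rest on the same two inputs; yours is a bit shorter, while the paper's has the side benefit of exhibiting explicit fractional generators of $\sL^{sat}$, which are reused in the final section to compute the saturations for the cubic ancestors. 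Two small points you should make explicit: in the $\PP^3$ case the indivisibility of $H$ is not a hypothesis but is quoted from \cite{babbage}, and your passage from indivisibility in $\Pic(S)$ to indivisibility in $\Lambda$ correctly uses that $\Pic(S)$ is saturated in $H^2(S,\ZZ)$ (being the kernel of the map to the torsion-free group $H^2(\hol_S)$); also, in the chain of isomorphisms you silently invoke biduality $(\sK')^{\vee}\cong H_1(S',\ZZ)$ for the finite group $H_1(S',\ZZ)$, which is fine but worth stating.
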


\begin{proof}

We have  the exact sequence 
$$ \rho : \Lambda : = H^{2} (S, \ZZ) \ra  H^{2} (D, \ZZ) = \oplus_1^{\nu} \ZZ [E_i] \oplus \ZZ H  = : \sL \ra H_1( S \setminus D , \ZZ) \ra 0 ,$$ 
and $\sL$ embeds in $\Lambda$ because the intersection product on $ \sL$ is non degenerate.

Then $\sL / \rho  (\Lambda) $ is a quotient  of $\sL / \rho (\sL) =  \oplus_{x \in \Sigma}H_1(x)  \oplus (\ZZ/d ) [H]$
and (i) follows.

We defined  the group underlying the code $\sK'$ as  the dual of
  \begin{equation}\label{Extended} 
  H_1 (Y^* \setminus H , \ZZ) \cong coker ( H^2 (S, \ZZ) \ra  \oplus_1^{\nu} \ZZ [E_i] \oplus \ZZ H ),
  \end{equation}
   
the cokernel  of the dual map of the embedding inside the  lattice $
\Lambda : = ( H^2 (S, \ZZ) = H^2 (\tilde{Y}, \ZZ)  $ of the lattice $\sL$ ( generated by the
curves $E_i$ and by the hyperplane section $H$).

In view of the exact sequence 
$$0  \ra \sL \ra \Lambda \ra F \oplus T \ra 0 ,$$
where $T$ is a torsion group and $F$ a free abelian group, dualizing we get (in view of the unimodularity of $\Lambda$)
$$0  \ra F^{\vee} \ra \Lambda \ra \sL^{\vee} \ra Ext^1( T, \ZZ) \ra 0,$$
showing that $T \cong H_1 (Y^* \setminus H , \ZZ)$.

 Consider now the saturation $\sL' : =  \sL^{sat} $ of $\sL$ inside $\Lambda$; $\sL' $  is primitively embedded.

Since $ \sL \subset \sL'$, and $ \sL' /  \sL \cong T$, it follows that $\sL'  =  \sL^{sat} $ is generated by $\sL$
and by classes $\la \in \Lambda$ of the form 
$$  \la =  \sum_{x \in \Sigma} (\frac{1}{m_x} \sum_i a_i E_{i,x} )+ \frac{1}{d} b H , a_i, b \in \ZZ,$$
where $m_x$ is the exponent of the group $H_1(x)$.

Since $m : = LCM \{ m_x, x \in \Sigma \}$, then, 
since $m \la \in \Lambda$, we obtain that $ \frac{mb}{d}  H \in \Lambda$.

But $H$ is indivisible (cf. for instance \cite{babbage}, page 464), hence $d : = H^2$ divides $mb$; if $d$ is relatively prime to $m$, this implies that $d$ divides $b$,
hence we may assume $b=0$. Else, setting $d' = GCD (m,d)$,  $d = d' c, m = d' c'$, then $b = c b'$, hence
$  \la =  \sum_{x \in \Sigma} (\frac{1}{m_x} \sum_i a_i E_{i,x} )+ \frac{1}{d'} b' H , a_i, b' \in \ZZ.$

 The  only facts that we have used
 are that $Pic(\tilde{Y})$ is a free abelian group, because $H_1(S)=0$, 
and that the class of $H$ is indivisible.

\end{proof}

\subsection{Weight restrictions for the extended code} 
If we have now an extended code
$$   \sK' \hookrightarrow   \oplus_{x \in \Sigma}H_1(x)^{\vee}   \oplus (\ZZ/d' ) [H] $$

and a vector $v$ which is not in $\sK$, hence with `second' coordinate $m \in \ZZ/d' $,
we can again consider the primary decomposition, and obtain restrictions for the labeled weight
using the same method as for $\sK$, namely taking the cyclic covering branched on $H$ and
on a subset $\sN$ of  the singular set $\Sigma$.

 We shall work out more systematically these restrictions in the sequel to this first part, but let us give easy extensions of
 the previous Propositions \ref{N=2} and \ref{N=3}.

\begin{prop}\label{extended-N=2}
Let $v$ be a code vector in $\sK' \setminus \sK$ of order $N=2$. Define $\sN$ as the subset of the singular points $x \in \Sigma$
such that $H_1(x)$ surjects onto $\ZZ/2$. Let $t(A_{2m+1})$ be the $A_{2m+1}$-weight of $v$,
that is, the number of points in $\sN$ which are of type $A_{2m+1}$.
Let similarly $t(E_7)$ be  the number of points in $\sN$ which are of type $E_7$,
$t(D_n, -)$ the number of points in $\sN$ which are of type $D_n$, and of subtype  $(-)$  for $v$ if $n$ is even,
$t(D_{2m}, +)$ the number of points in $\sN$ which are of type $D_{2m}$, and of subtype  $(+)$  for $v$.

Then $$ (**) \   \sum_m ((m+1) \  t(A_{2m+1}) + m  \ t(D_{2m}, +)) + \sum_n 2 t(D_n, -)  + 3 t(E_7) - \frac{d}{2} - K_S \cdot H$$
is divisible by $4$.
\end{prop}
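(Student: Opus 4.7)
The plan is to mimic the proof of Proposition \ref{N=2}, tracking the extra contribution coming from the hyperplane section $H$. By Theorem \ref{codehomology2}, a vector $v \in \sK' \setminus \sK$ of order $2$ corresponds to a divisor class $L \in \Pic(S)$ satisfying $2L \equiv B + H$, where $B := \sum_{x \in \sN} B_x$ and $B_x$ is the reduced exceptional divisor attached to the nontrivial character $H_1(x) \to \ZZ/2$ cut out by $v$, exactly as in the proof of Proposition \ref{N=2} (including the subtypes $(\pm)$ for $D_{2m}$). Since $L$ is only defined modulo integer divisors, any even multiple of $H$ may be absorbed into $L$, so the coefficient of $H$ in $2L$ can be normalized to $1$; this reflects the fact that $v$ has nontrivial image in the $H$-coordinate of $\ZZ/d'$.

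Next, I would form the double cover $F : Z \to S$ branched on $B + H$. As in Proposition \ref{N=2}, $F_*\hol_Z = \hol_S \oplus \hol_S(-L)$, so $\chi(\hol_S(-L)) \in \ZZ$, and Riemann--Roch gives
$$\chi(\hol_S(-L)) \;=\; \chi(\hol_S) + \tfrac{1}{2}\, L(L + K_S),$$
forcing $L(L + K_S) \equiv 0 \pmod 2$. Using that $B$ is exceptional, whence $B \cdot K_S = 0$ and $B \cdot H = 0$, one computes $4 L^2 = (B + H)^2 = \sum_{x \in \sN} B_x^2 + d$ and $4\, L \cdot K_S = (B + H) \cdot 2 K_S = 2 K_S \cdot H$. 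The parity condition thus translates into
$$\sum_{x \in \sN} B_x^2 \;+\; d \;+\; 2\, K_S \cdot H \;\equiv\; 0 \pmod 8.$$
Recalling the self-intersection computations from Proposition \ref{N=2} ($B_x^2 = -2(m+1)$ for $A_{2m+1}$, $-6$ for $E_7$, $-4$ for $D_n$ with $n$ odd and for $D_{2m}$ of subtype $(-)$, and $-2m$ for $D_{2m}$ of subtype $(+)$), one has $\sum_{x \in \sN} B_x^2 = -2\,(**)_0$, where $(**)_0$ denotes the expression of Proposition \ref{N=2}. Substituting yields $2\,(**)_0 \equiv d + 2 K_S \cdot H \pmod 8$, i.e.\ $(**)_0 - d/2 - K_S \cdot H \equiv 0 \pmod 4$, which is exactly the claimed statement.

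The main obstacle is the very first step: extracting from Theorem \ref{codehomology2} the precise normalization $2L \equiv B + H$ with $B$ of the same shape as in Proposition \ref{N=2}. One must unpack the description of $\sL^{sat}$ to check that the saturation class realizing $v$ can be represented by an $L \in \Pic(S)$ whose $H$-coefficient reduces to $1$ modulo $2$, and whose exceptional coefficients are exactly the characters $\ga_i \mapsto 0$ or $1$ appearing in Proposition \ref{N=2}. Once this normalization is in place, the argument is a direct extension of the $\sK$-case: the only new ingredient is the term $L \cdot K_S = \tfrac{1}{2} K_S \cdot H$, which produces precisely the correction $d/2 + K_S \cdot H$. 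Note that, unlike in Proposition \ref{N=2}, there is no analog here of the stronger mod-$8$ statement, since the nonvanishing of $L \cdot K_S$ already obstructs a further halving of the congruence.
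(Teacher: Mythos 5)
Your proposal is correct and follows essentially the same route as the paper: the double cover branched on $\sum_{x\in\sN}B_x + H$ with $2L \equiv \sum_x B_x + H$, the evenness of $\chi(\hol_S(-L))-\chi(\hol_S)=\tfrac12 L(L+K_S)$ from Riemann--Roch, the computation $K_S\cdot L=\tfrac12 K_S\cdot H$ using that the $B_x$ are exceptional, and the substitution of the $B_x^2$ values from Proposition \ref{N=2} to get $\sum_x B_x^2 + d + 2K_S\cdot H\equiv 0 \pmod 8$, which is exactly $(**)$. The normalization $2L\equiv \sum_x B_x + H$ that you flag as the main obstacle is precisely what the paper takes as the starting point (the cyclic covering branched on $H$ and on $\sN$), so no further gap remains.
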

\begin{proof}
Observe again that $F_* (\hol_Z) = \hol_S \oplus \hol_S (-L)$, and by Riemann Roch 
$$\chi(-L) = \chi(\hol_S ) + \frac{1}{2} L (L+ K_S)  = \chi(\hol_S ) + \frac{1}{2} (L^2 + L K_S).$$
We have  $K_S L = \frac{1}{2} K_S  H $, in view of 
$ 2 L \equiv \sum_{x \in \sN} B_x + H,$ and since the $B_x$ are exceptional. 

Hence  $8$ divides 
$$ 4 L^2 +  2 K_S  H =  \sum_{x \in \sN} B_x^2  + H^2 + 2 K_S  H,$$ 
 and this condition is equivalent to $(**)$.

\end{proof}

\begin{prop}\label{extended-N=3}
Let $v$ be a code vector in $\sK' \setminus \sK$  of order $N=3$, and let $v(H)= : \e = \pm 1$. 

Define $\sN$ as the subset of the singular points $x \in \Sigma$
such that $H_1(x)$ surjects onto $\ZZ/3$. Let $t(A_{3s -1})$ be the $A_{3s -1}$-weight of $v$,
that is, the number of points in $\sN$ which are of type $A_{3s -1}$.
Let similarly $t(E_6)$ be  the number of points in $\sN$ which are of type $E_6$,

Then $d$ is divisible by $3$ and 
$$ (***) \   \sum_s s \  t(A_{3s -1}) -  t(E_6) + \frac{d}{3} +   \e K_S H$$
is divisible by $3$.
\end{prop}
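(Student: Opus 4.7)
The plan is to imitate the proof of Proposition~\ref{N=3} (strict code, $N=3$) and of Proposition~\ref{extended-N=2} (extended code, $N=2$), adjusting the branch divisor of the associated cyclic triple cover to account for the hyperplane section. First I would translate the datum $v \in \sK' \setminus \sK$ of order $3$ with $v(H) = \epsilon \in \{\pm 1\} \subset \ZZ/3$ into a cyclic triple cover $f : Y' \to Y$ branched along $H$ (with multiplicity $\epsilon \bmod 3$) and along the singular points $x \in \sN$; lifting to the minimal resolution $S = \tilde{Y}$, this is induced by a line bundle $L \in \Pic(S)$ with
\[
3L \equiv \sum_{x \in \sN} B_x + \epsilon H,
\]
where $B_x$ is the signed exceptional divisor with coefficients $\pm 1$ already appearing in the proof of Proposition~\ref{N=3}.

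Pairing this relation successively with $H$, with $K_S$, and with itself, and using that $H \cdot B_x = 0$, that $B_x \cdot B_{x'} = 0$ for $x \neq x'$, and that $K_S \cdot D_i = 0$ for every exceptional $(-2)$-curve, yields
\[
3 L \cdot H = \epsilon\, d, \qquad 3 L \cdot K_S = \epsilon\, K_S \cdot H, \qquad 9 L^2 = \sum_{x \in \sN} B_x^2 + d.
\]
Since $L \in \Pic(S)$, the numbers $L \cdot H$ and $L \cdot K_S$ are integers; this already forces $3 \mid d$ (the first half of the statement) and $3 \mid K_S \cdot H$.

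The congruence $(***)$ then falls out of the integrality of $L^2 + L \cdot K_S$: indeed $9(L^2 + L K_S)$ is divisible by $9$, and the displayed identities rewrite this as
\[
\sum_{x \in \sN} B_x^2 + d + 3\epsilon\, K_S H \equiv 0 \pmod 9.
\]
Substituting the values $B_x^2 = -6 s$ for $x$ of type $A_{3s-1}$ and $B_x^2 = -12$ for $x$ of type $E_6$ (already tabulated in Proposition~\ref{N=3}), then dividing by $3$ and reducing modulo $3$ (using $-2 \equiv 1$ and $-4 \equiv -1$), produces $(***)$. I do not anticipate a real obstacle: the whole argument is bookkeeping of signs and multiplicities. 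The one delicate point is the propagation of the sign $\epsilon$ from the coefficient of $H$ in $3L$ into the pairing $3 L \cdot K_S = \epsilon K_S H$, and this is precisely what generates the term $\epsilon K_S H$ in $(***)$; one could alternatively invoke Riemann--Roch for $\chi(-L)$, as in the $N=2$ case, to upgrade divisibility by $9$ to divisibility by $18$ in the displayed congruence, but this is not needed for the claimed mod~$3$ conclusion.
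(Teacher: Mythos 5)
Your argument is essentially the paper's own proof: the same cyclic triple cover determined by $v$, the same relation $3L \equiv \sum_{x \in \sN} B_x + \e H$ on the resolution $S$, the same contributions $B_x^2 = -6s$ for $A_{3s-1}$ and $B_x^2 = -12$ for $E_6$, and the same reduction modulo $3$. The only (harmless) deviation is that you obtain the congruence from the integrality of $L^2 + L\cdot K_S$ (divisibility by $9$) and deduce $3 \mid d$ and $3 \mid K_S\cdot H$ by pairing the relation with $H$ and $K_S$, whereas the paper additionally invokes Riemann--Roch to upgrade to divisibility by $18$ --- which, as you correctly note, is not needed for the mod-$3$ conclusion.
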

\begin{proof}
Observe that $F_* (\hol_Z) = \hol_S \oplus \hol_S (-L) \oplus \hol_S (-L)$. 

And, as in the proof of Proposition \ref{N=2},  by Riemann Roch 
$$\chi(-L) = \chi(\hol_S ) + \frac{1}{2} L (L+ K_S) .$$

Since 
$ 3 L \equiv \sum_{x \in \sN} B_x + \e H,$ we get 
$$ L (L+ K_S)  = L^2 + \frac{1}{3} \e K_S H,$$ and 
$$ 9 L^2 = (\sum_{x \in \sN} B_x)^2  + H^2 , \ \  9 L (L+ K_S) = (\sum_{x \in \sN} B_x)^2  + H^2 + 3 \e K_S H,$$
hence  $18$  divides 
$$= - 6 [ 2 (t(E_6) +  \sum_s s \ t(A_{3s -1}) ] + d + 3  \e K_S H.$$

 The previous formulae show that $d$ is divisible by $3$,
and that $d + 3 \e K_S H$ is an even number, hence
$3$ divides 
$$= - 2 [ (2 t(E_6) +  \sum_s   s \ t(A_{3s -1}) ] + \frac{d}{3}  +   \e K_S H,$$
and this is equivalent to
$$ (***) \   \sum_s s \  t(A_{3s -1}) -  t(E_6) + \frac{d}{3}+   \e K_S H $$
being  divisible by $3$.

\end{proof}

\section{Examples: the ancestors for  surfaces of degree $\leq 3$ and some K3 ancestors}

If $Y$ is a singular normal surface in $\PP^3$ of degree $d$, then,  for $d=2$,  $Y$ must be
 the quadric cone. 

Moreover, for all $d$, $Y$ has ADE singularities \cite{artin} if the singular 
points are rational and have multiplicity 2.

Even if  almost everything is known for cubic surfaces ($d=3$), we want to describe the ancestors 
from our point of view (see especially Table 9.1 of \cite{cag}, and also Section 8 of \cite{cat-kummer}).

Observe before hand that  the minimal resolution $S$ of $Y$ has $b_2(S)=7$, hence  the number of exceptional curves 
is smaller or equal to  6, and if equality holds, we surely have an ancestor (since this number, 
which we called rank, is maximal).

\begin{enumerate}
\item
For nodal surfaces the ancestor is the $4$-nodal Cayley cubic $$\sC_3 :=  \{ x | \sum_0^3 \frac{x_0 x_1 x_2 x_3}{x_j} = 0\}.$$
It is also an ADE ancestor because $\sC_3$ is the only cubic with at least four singular points.
\item
 The next examples are  ancestors because the rank, i.e. the number of exceptional curves,  will be equal to  6.
 \item
Another ancestor is the cubic with 3 $A_2$ singularities (3 cusps)  $$ \sY^{cu}_3 : = \{ x | x_0^3 -  x_1 x_2 x_3= 0\}.$$
\item
A third  ancestor is the cubic with an $E_6$-singularity,  $$ \sY^{E_6}_3 : = \{ x | x_0 x_3^2 -   g(x_1, x_2, x_3)= 0\},$$
where, at the point $(1,0,0,0)$, the plane cubic $g =0$ has the line $x_3=0$ as an inflectionary tangent.
\item
The fourth ancestor is   the surface  $S$ obtained blowing up  6 infinitely near  points in the plane such that the first 3 do not lie
on a line  but  the 6 do  lie on  a conic: then the anticanonical system of $S$ maps to 
a cubic $Y^{5,1}$ which 
 has an $A_5$ singularity and a disjoint $A_1$ singularity.
 
  Because,  if $E'_i$ is the total transform of the i-th point, then  $H : =  3 L - \sum _i E'_i$ 
 has $H^2=3$ and contracts each $E_i, \forall  i \geq 2$, and also contracts  the proper transform
 of the conic $ Q \in | 2 L -  \sum _i E'_i|$.
\end{enumerate}

We can now describe the associated codes, even without using  the restrictions for the weights of 
$\sK'$.

But we observe that Propositions \ref{N=3} and \ref{extended-N=3} imply that for vectors in $\sK$
(of order $3$) the difference between the number of $A_2$-singularities and the number of $E_6$-singularities 
appearing is divisible by 3. Whereas, for vectors in 
$\sK'  \setminus \sK$ the same number is congruent to $0$ or $2$ modulo 3.

 In general the sublattice $\sL = \sL_0 \oplus \ZZ H$, where $\sL_0 : = \oplus_i \ZZ E_i$,
 is  contained in the lattice $\Lambda = H^2(S, \ZZ)$, and we have the inclusions
 $$ \sL = \sL_0 \oplus \ZZ H \subset  \sL^{sat} \subset  \Lambda =  \Lambda^{\vee} \subset ( \sL^{sat} )^{\vee} \subset  \sL_0^{\vee} \oplus \ZZ H^{\vee}.$$ 
 
 We have seen that $(\sK')^{\vee} \cong  \sL^{sat}/ \sL$, and that $\sK^{\vee} =  Coker (\Lambda \ra \sL_0^{\vee})$.

\begin{enumerate}
\item
For the Cayley cubic $$\sC_3 :=  \{ x | \sum_0^3 \frac{x_0 x_1 x_2 x_3}{x_j} = 0\}$$
the code $\sK = \ZZ/2 (\sum_1^4 e_i) \subset (\ZZ/2)^4$(well known, see also \cite{nodal}), and it equals $\sK'$ since $d=3$ is odd.
\item
 In the next examples, since the lattices $\sL, \Lambda$, have the same rank,  $  \sL^{sat}= \Lambda$, hence 
 $(\sK')^{\vee} = \Lambda/ \sL$.   Since $\Lambda$  is unimodular, and with the
 same rank as $\sL$,   $\Lambda$ corresponds to an isotropic
 subspace of the discriminant lattice 
 $$\Delta : = ( \sL' )^{\vee} /  \sL' = (( \sL'_0 )^{\vee} /  \sL' _0) \oplus ((\ZZ/3) H).$$ 
  \item
For the cubic with 3 cusps ($A_2$ singularities)  $$ \sY^{cu}_3 : = \{ x | x_0^3 -  x_1 x_2 x_3= 0\},$$
$\Delta \cong (\ZZ / 3)^3 \oplus \ZZ / 3 $, hence $\sK' \cong ( \ZZ/3)^2$, while $ \sK \cong \ZZ / 3$.
The last assertion because  $Y$ is a quotient $ Y = \PP^2 / (\ZZ / 3)$, with action having as fixed points exactly  the
3 coordinate points: hence $\pi_1(Y^*) = \ZZ/3$.

 $\Lambda/ \sL$ is generated by $ \frac{1}{3} ( E_1-E_2 - E_3 + E_4 + H)$
and by $ \frac{1}{3} ( E_1-E_2 - E_5 + E_6 + H)$.
\item
For the cubic with an $E_6$-singularity,  $$ \sY^{E_6}_3 : = \{ x | x_0 x_3^2 -   g(x_1, x_2, x_3)= 0\},$$
 $\Delta \cong \ZZ / 3 \oplus \ZZ / 3 $, and  $\sK' \cong \ZZ/3 $, $ \sK = 0 $.
 $\Lambda/ \sL$ is generated by   $ \frac{1}{3} ( E_1-E_2 - E_5 + E_6 + H)$.
\item
For the cubic with an $A_5$ singularity and a disjoint $A_1$ singularity,
$\Delta \cong ( \ZZ / 6  \oplus  \ZZ / 2)  \oplus  \ZZ / 3 $,
hence $\sK' \cong \ZZ/6 $ and  $\sK \cong \ZZ/2 $.
Here the generator of $\Lambda/ \sL$ can be directly  found using the representation
of $S$ as a blow up of the plane: $$\frac{1}{6} [ - 2 H + 3 E_6 - \sum_1^5 i E_i].$$
 
\end{enumerate}

Once we have the ancestors, we can find all the other codes by  geometric driven shortening,
in view of the fact that cubic surfaces with ADE singularities are unobstructed.

We shall next show a (very partial) list of ancestors for (non polarized) K3 surfaces,
based on the following Proposition.

\begin{prop}\label{torusquotient}
Let  $ Y= T / G$, where $G$ is a finite group acting on a  2-dimensional complex torus with a strictly positive
but finite number
of fixpoints, and such that $Y$ has only ADE singularities,
so that $Y$ has trivial canonical sheaf $\om_Y$.

Then $Y$ is an ancestor.

Moreover, let $ T = \CC^2 / \Lambda$,
and let 
$\Ga$ be the group of affine transformations such tht $ Y = \CC^2 / \Ga$,
so that we have an exact sequence $$ 1 \ra \Lambda \ra \Ga \ra G \ra1.$$

Then  the code group $\sK$ is isomorphic to $\Ga^{ab} $,
and we have an exact sequence $$ \Lambda_G \ra \Ga^{ab} \ra G^{ab} \ra1,$$
where $\Lambda_G$ is the group of $G$-covariants, the quotient of $\Lambda$ 
by the subgroup generated by $\{ \la - g \la| \la \in \Lambda \}$).

If $G$ is cyclic, then $\Lambda_G \hookrightarrow  \Ga^{ab}$. 

Moreover, if  
$\Ga = \Lambda  \rtimes G$, then 
 $\sK \cong G \oplus \Lambda_G$.
\end{prop}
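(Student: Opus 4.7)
The plan is to first compute $\pi_1(Y^*)$, then read off $\sK$ via the formalism of this paper, and finally extract the structure of $\Gamma^{ab}$ from a standard low-degree exact sequence in group homology.

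Let $F \subset \CC^2$ be the set of points fixed by some nontrivial element of $\Gamma$ (equivalently, the full inverse image of $\Sing(Y)$). Since the $G$-action on $T$ has only finitely many fixed points, $F$ is a locally finite discrete (hence real-codimension $4$) subset of $\CC^2$, and in particular its complement $\CC^2 \setminus F$ is simply connected. The unramified quotient map $\CC^2 \setminus F \to Y^*$ is then a Galois $\Gamma$-covering which is also the universal covering, so $\pi_1(Y^*) \cong \Gamma$ and hence $H_1(Y^*, \ZZ) \cong \Gamma^{ab}$. Because $Y$ has ADE singularities and trivial canonical bundle, its minimal resolution is a smooth K3 surface, which is simply connected, so $H_1$ is finite; thus the code group $\sK = H_1^{\vee}$ is (non-canonically) isomorphic to $\Gamma^{ab}$.

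Next, I would apply the five-term exact sequence in integral group homology associated to the extension $1 \to \Lambda \to \Gamma \to G \to 1$:
\[
H_2(G, \ZZ) \longrightarrow H_0(G, \Lambda) \longrightarrow H_1(\Gamma, \ZZ) \longrightarrow H_1(G, \ZZ) \longrightarrow 0,
\]
and identify $H_0(G, \Lambda) = \Lambda_G$, $H_1(\Gamma, \ZZ) = \Gamma^{ab}$, and $H_1(G, \ZZ) = G^{ab}$. This yields the claimed exact sequence $\Lambda_G \to \Gamma^{ab} \to G^{ab} \to 1$. For cyclic $G$ the Schur multiplier $H_2(G, \ZZ)$ vanishes, so the leftmost map becomes injective, proving $\Lambda_G \hookrightarrow \Gamma^{ab}$.

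For the semidirect product $\Gamma = \Lambda \rtimes G$, I would compute $\Gamma^{ab}$ directly from commutators: $[\la, \la'] = 0$, $[g, g'] \in [G, G]$, and $[g, \la] = g\la - \la$ (writing $\Lambda$ additively). Hence $[\Gamma, \Gamma]$ is generated by $[G, G]$ together with $\{g\la - \la\}$, and the semidirect splitting induces a direct sum decomposition $\Gamma^{ab} \cong \Lambda_G \oplus G^{ab}$; under the (implicit) assumption that $G$ is abelian, this gives $\sK \cong G \oplus \Lambda_G$. Finally, to conclude that $Y$ is an ancestor I would invoke Theorem \ref{singK3} (so $Y$ is projectively unobstructed) together with Theorem \ref{thm_d_realized} (every small deformation of $Y$ with ADE singularities is obtained as a shortening), noting that the torus-quotient construction already realizes the maximal ADE configuration in its equisingular deformation class, so $Y$ cannot itself arise as a shortening and is therefore closed. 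The main obstacle will be this last step: making precise the claim of \emph{maximality} of the singular set among ADE surfaces deforming to $Y$; the group-homology computations themselves are routine.
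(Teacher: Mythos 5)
Your computation of the group-theoretic assertions is correct, and in fact cleaner than the paper's: you identify $\pi_1(Y^*)\cong\Ga$ exactly as the paper does (via the free action on the complement of the discrete fixed locus in $\CC^2$), and then obtain the sequence $\Lambda_G\ra\Ga^{ab}\ra G^{ab}\ra 1$ from the five-term Lyndon--Hochschild--Serre sequence, with injectivity of $\Lambda_G\ra\Ga^{ab}$ for cyclic $G$ coming from the vanishing of the Schur multiplier $H_2(G,\ZZ)$; the paper instead argues by writing generators and relations for $\Ga^{ab}$, which is sketchier. Your direct commutator computation for $\Ga=\Lambda\rtimes G$, giving $\Ga^{ab}\cong\Lambda_G\oplus G^{ab}$ (so that the stated $\sK\cong G\oplus\Lambda_G$ presumes $G$ abelian, as in the examples), is also fine and matches the intended conclusion.

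The genuine gap is the first and main claim, that $Y$ is an ancestor, and your own closing sentence concedes it: the ``maximality of the ADE configuration'' is precisely what has to be proved, and the tools you invoke cannot prove it. Theorems \ref{singK3} and \ref{thm_d_realized} go in the wrong direction: they say that all geometric driven shortenings of $Y$'s code are realized by partial smoothings of $Y$ (and conversely for smoothings \emph{of} $Y$), whereas ancestor-ness requires showing that $Y$ is not itself a partial smoothing of some \emph{more} singular ADE surface $X$ with trivial canonical sheaf; unobstructedness of $Y$ says nothing about such an $X$ (and \ref{thm_d_realized} is moreover stated for polarized surfaces in $\PP^3$, while this proposition lives in the non-polarized setting). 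The missing idea, which is the heart of the paper's proof, is topological and classificatory: if $X$ partially smooths to $Y$, then $\pi_1(X^*)\twoheadrightarrow\pi_1(Y^*)=\Ga$, which is infinite because it contains $\Lambda\cong\ZZ^4$; hence $\pi_1(X^*)$ is infinite, and by Campana \cite{campana} $X$ is again a torus quotient. One then consults the finite list of ADE configurations of $2$-dimensional torus quotients (Theorem 3.6 of \cite{gpp}, based on \cite{fujiki}): in cases (5)--(10) the configuration already has $19$ exceptional curves, so it cannot properly contain another one, and for cases (1)--(4) one checks directly that the configuration of $Y$ is not a geometric shortening of any other configuration in the list. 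Without this surjectivity-of-$\pi_1$ argument plus the Campana/Fujiki--GPP classification, the ancestor statement remains unproved in your proposal.
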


\begin{proof}
If $Y$  were a partial smoothing  of another
 singular K3 surface $X$ with ADE singularities, then since $\pi_1 (X^*) \twoheadrightarrow \pi_1 (Y^*)$,
 then also $\pi_1 (X^*) $ would be infinite and by \cite{campana} also $X$ would be a Torus quotient,
and  the configuration of exceptional curves of $Y$ would be  a 
 geometric shortening of another torus quotient.
 
 Inspecting the list contained in Theorem 3.6 of \cite{gpp}, based on the results of \cite{fujiki},  we see that this cannot occur 
 for the cases (5)-(10), because the 
 number of exceptional curves is 19.
 
 For the  cases (1)-(4), one sees that the configuration of exceptional curves of $Y$ is not a
 geometric shortening of another one in the list.
 
 We have   $\pi_1(Y^*) = \Ga$, since $\CC^2$ is the orbifold universal covering of $Y^*$,
 hence $\sK$ is isomorphic to $\Ga^{ab} $.
 
 Now, let $\la_1, \dots, \la_4$ be a system of generators of $\Lambda$,  $ g_1, \dots , g_r$
 be a system of generators of $G$, and $ \ga_1, \dots , \ga_r$ be lifts of these generators
 to $\Ga$.
 
 Then $\Ga^{ab} $ has the above as generators, and we must add the relations 
 $$[\ga_j, \la_i]=1 = [\ga_h, \ga_j], \forall i=1,2,3,4, \ j,h, = 1, \dots, r.$$
 
 Then  the image of $\Lambda$ inside $\Ga^{ab}$ factors through $ \Lambda_G$; 
 and if $G$ is cyclic we have an exact sequence.
 
 In the case of a semidirect product, $G$ is a subgroup, hence we may take $\ga_j = g_j$. 
 
\end{proof}
\begin{ex}

For K3 surfaces, two ancestors are  well known (\cite{nikulin}, \cite{b-9}):

1)  the Kummer surfaces $ X= T / \pm 1$, with 16 $A_1$ singularities,
(16 nodes), which are quartic surfaces in $\PP^3$ if $T$ is a principally polarized 2-dimensional torus:
here $G = \{ \pm 1\}$, and  since $\Ga = \Lambda  \rtimes G$ we have $\sK \cong (\ZZ/2)^5$;

2) the K3 surfaces $Y= (F \times F)/ \ZZ/3$ with action generated by $(\zeta, \zeta^2)$. Here $F$ is the Fermat cubic,  $\zeta$ is
a primitive 3rd root of unity, and $Y$ has 9 cusps.
Here $\Lambda = (\ZZ \oplus \zeta \ZZ)^2$,  the covariants  $(\ZZ \oplus \zeta \ZZ)_G \cong \ZZ/3$,
again we have a semidirect product, hence $\sK \cong (\ZZ/3)^3$.

Barth \cite{b-8} proved  that the latter may only yield surfaces of degree $d \equiv 0,2 \ (mod \ 6)$.

3) Theorem 3.6 of \cite{gpp} contains a list of the exceptional configurations 
for 10 quotients of a 2-dimensional complex torus, the first two being
 the above  two examples 1) and 2). They are  all ancestors, by Proposition \ref{torusquotient}.
 
  In the cases (5)-(10) the number of exceptional curves is 19,
 in the cases (3) and (4) the  number of exceptional curves for  $Y$  is 18, but the configuration of exceptional curves of $Y$ is not a
 direct geometric shortening of another one in the list with 19 exceptional curves.

 4) Needless to say, an interesting question concerns  the degrees of the possible polarizations on the above  surfaces
 in the cases (3)-(10).
\end{ex}

\end{document}